\def\figurename{Figure} 
\renewcommand{\fnum@figure}[1]{\figurename~\thefigure.}
\def\tablename{Table} 
\renewcommand{\fnum@table}[1]{\tablename~\thetable.}
\newtheorem{theorem}{Theorem}[section]
\newtheorem{proposition}{Proposition}[section]
\theoremstyle{definition}
\newtheorem{definition}{Definition}[section]
\theoremstyle{remark}
\newtheorem{remark}{Remark}[section]
\numberwithin{equation}{section}
\def\P{\mathbb P}
\def\R{\mathbb R}
\def\E{\mathbb E}
\def\E{\mathbb E}
\def\N{\mathbb N}
\def\cal{\mathcal}
\begin{document}
\title{\bfseries\scshape{Obstacle problem for SPDE with nonlinear Neumann boundary condition via reflected generalized backward doubly SDEs}}
\author{\bfseries\scshape Auguste Aman\thanks{Supported by AUF post doctoral grant 07-08, Réf:PC-420/2460}\;\;\thanks{augusteaman5@yahoo.fr,\ Corresponding author.}\\
U.F.R.M.I, Universit\'{e} de Cocody, \\582 Abidjan 22, C\^{o}te d'Ivoire\\
\\\bfseries\scshape N. Mrhardy\thanks{n.mrhardy@ucam.ac.ma}\\
F.S.S.M, Universit\'{e} Cadi Ayyad, \\2390, Marrakech, Maroc}

\date{}
\maketitle \thispagestyle{empty} \setcounter{page}{1}

\begin{abstract}
This paper is intended to give a representation for
stochastic viscosity solution of semi-linear reflected stochastic
partial differential equations with nonlinear Neumann boundary
condition. We use its connection with reflected generalized backward
doubly stochastic differential equations.
\end{abstract}

\noindent {\bf AMS Subject Classification:}  60H15; 60H20

\vspace{.08in} \noindent \textbf{Keywords}: Backward doubly SDEs, Stochastic PDEs, Obstacle
problem, stochastic viscosity solutions.

\section{Introduction}
 Backward stochastic
differential equations (BSDEs, for short) were introduced by Pardoux
and Peng \cite{PP1} in 1990, and it was shown in various papers that
stochastic differential equations (SDEs) of this type give a
probabilistic representation for solution (at least in the viscosity
sence) of a large class of system of semi-linear parabolic partial
differential equations (PDEs). Thereafter a new class of BSDEs,
called backward doubly stochastic (BDSDEs), was considered by
Pardoux and Peng \cite{PP3}. It seems suitable
for giving a representation for a system of parabolic
stochastic partial differential equations (SPDEs). We refer to
Pardoux and Peng \cite{PP3} for the link between SPDEs
and BDSDEs when the solutions of SPDEs are regular i.e the coefficients are smooth enough (at least in $C^3$).
The more general situation is much more delicate to treat
because of the difficulties of extending the notion of viscosity solutions to SPDEs.

The notion of viscosity solution for PDEs was introduced by Crandall, Ishii
and Lions \cite{CL} for certain first-order Hamilton-Jacobi
equations. Today this theory becomes an important tool in many
applied fields, especially in optimal control theory and numerous
subjects related to it.

The stochastic viscosity solution for semi-linear SPDEs was
introduced firstly by Lions and Souganidis in \cite{LS}.
They use the so-called "stochastic characteristic" to remove the
stochastic integrals from a SPDEs. A few years later, two others definitions of stochastic viscosity solution of SPDEs are considered by
Buckdahn and Ma respectively in \cite{BM1,BM2} and \cite{BM3}. In \cite{BM1,BM2}, they used the "Doss-Sussman"
 transformation to connect the stochastic viscosity solution of SPDEs with the solution of associated BDSDEs.
 In \cite{BM3}, they introduced the stochastic viscosity solution by using the notion
 of stochastic sub and super jets. Next, in order to give a representation for viscosity solution of  SPDEs with nonlinear Neumann boundary condition,
 Boufoussi et al. \cite{Bal} introduced the so-called generalized BDSDEs. They refer
 the first technique (Doss-Sussman transformation) of Buckdhan and Ma \cite{BM1,BM2}.

Motivated by the work of Boufoussi et al. \cite{Bal} and employing the penalization method, we aim to establish the existence of viscosity solution for semi-linear reflected SPDEs with nonlinear Neumann boundary condition of the form:
\begin{eqnarray*}
\mathcal{OP}^{(f,\phi,g,h,l)}\left\{
\begin{array}{l}
(i)\,\displaystyle \min\left\{u(t,x)-h(t,x),\; -\frac{\partial
u(t,x)}{\partial t}-[
Lu(t,x)+f(t,x,u(t,x),\sigma^{*}(x)D_{x}u(t,x))]\right.\\\\
\left.\,\,\,\,\,\,\,\,\,\,\,\,\,\,\,\,\,\ -
g(t,x,u(t,x))\dot{B}_{s}\right\}=0,\,\,\
(t,x)\in[0,T]\times\Theta\\\\
(ii)\,\displaystyle\frac{\partial u}{\partial
n}(t,x)+\phi(t,x,u(t,x))=0,\,\,\ (t,x)\in[0,T]\times\partial\Theta, \\\\
(iii)\,u(T,x)=l(x),\,\,\,\,\,\,\ x\in\overline{\Theta}
\end{array}\right.
\end{eqnarray*}
where $\dot{B}$ denotes white noise, $L$ is the infinitesimal generator of a diffusion process $X$, $\Theta$ is a connected bounded
domain included in $\R^{d}$, $(d\geq 1)$; and $f,\, g,\, \phi,\, l, h$ are some measurable functions.

More precisely, we give a direct links with the following reflected generalized BDSDE:
\begin{eqnarray*}
Y_{t}&=&\xi+\int_{t}^{T}f(s,Y_{s},Z_{s})ds+\int_{t}^{T}\phi(s,Y_{s})dA_{s}+\int_{t}^{T}g(s,Y_{s})\,\overleftarrow{dB}_{s}\\
&&-\int_{t}^{T}Z_{s}dW_{s}+K_{T}-K_t,\,\ 0\leq t\leq
T,\label{a1}
\end{eqnarray*}
where $\xi$ is the terminal value, $A$ is a positive real-valued
increasing process and  $dW$ and $\overleftarrow{dB}$ denote respectively the classical
forward and backward It\^{o} integral. Our work generalize \cite{Ral} in where authors treat
deterministic reflected PDEs with nonlinear Neumann boundary conditions i.e $g\equiv0$ and the second appears in \cite{Bal} where
the non reflected SPDE with nonlinear Neumann boundary condition is considered.

The present paper is organized as follows: An existence and
uniqueness result to large class of reflected generalized BDSDEs is
shown in Section 2. Section 3 is devoted to give a definition of a
reflected stochastic solution to SPDEs and establishes its existence result.

\section{Reflected generalized backward doubly stochastic differential equations}
\subsection{Notation, assumptions and definition.}

The scalar product of the space $\R^{d} (d\geq 2)$ will be denoted
by $<.,.>$ and the associated Euclidian norm  by $\|.\|$.

In what follows let us fix a positive real number
$T>0$. First of all $\{W_{t}, 0\leq t\leq T\}$ and $\{B_{t},\ 0\leq
t\leq T\}$ are two mutually independent standard Brownian motions
with values respectively in $\mbox{I\hspace{-.15em}R}^{d}$ and
$\mbox{I\hspace{-.15em}R}^{\ell}$, defined respectively on the two
complete probability spaces $(\Omega,\mathcal{F},{\P})$ and
$(\Omega',\mathcal{F}',{\P}')$. Let ${\bf
F}^{B}=\{\mathcal{F}^{B}_{t,T}\}_{t\geq 0}$ denote a retrograde filtration generated by $B$, augmented by the $\P$-null sets of
$\mathcal{F}$; and let $\mathcal{F}^{B}_T=\mathcal{F}^{B}_{0,T}$. We also
consider the following family of $\sigma$-fields:
\begin{eqnarray*}
\mathcal{F}^{W}_{t}=\sigma\{W_{s},0\leq s\leq t\}.
\end{eqnarray*}

Next we consider the product space $(\overline{\Omega},\overline{\mathcal{F}},\overline{\P})$ where
\begin{eqnarray*}
\overline{\Omega}=\Omega\times\Omega',\,\,\overline{\mathcal{F}}=\mathcal{F}\otimes\mathcal{F}'\,\,
\mbox{ and}\,\, \overline{\P}=\P\otimes\P'.
\end{eqnarray*}
For each $t\in[0,T]$, we define
\begin{eqnarray*}
\mathcal{F}_{t}=\mathcal{F}_{t,T}^{B}\otimes\mathcal{F}^{W}_{t}\vee\overline{\mathcal{N}}\; \mbox{and}\;\mathcal{G}_{t}=\mathcal{F}_{T}^{B}\otimes\mathcal{F}^{W}_{t}\vee\overline{\mathcal{N}}.
\end{eqnarray*}
where $\overline{\mathcal{N}}$ denotes all the $\overline{\P}$-null sets in
$\overline{\mathcal{F}}$.

The collection ${\bf F}= \{\mathcal{F}_{t},\ t\in
[0,T]\}$ is neither increasing nor decreasing and it does not
constitute a filtration. However, $(\mathcal{G}_{t})$ is a filtration.

Further, we assume that random variables
$\xi(\omega),\;\omega\in \Omega$ and
$\zeta(\omega'),\; \omega'\in \Omega'$ are considered as
random variables on $\overline{\Omega} $ via the following identification:
\begin{eqnarray*}
\xi(\omega,\omega')=\xi(\omega);\,\,\,\,\,\zeta(\omega,\omega')=\zeta(\omega').
\end{eqnarray*}

In the sequel, let\
$\{A_t,\ 0\leq t\leq T\}$\ be a continuous, increasing and
${\bf F}$-adapted real valued  process such that\ $A_0=0.$

For
any $d\geq 1$, we consider the following spaces of processes:
\begin{enumerate}
\item $M^{2}(0,T,\R^{d})$ denote the Banach space of all
equivalence classes (with respect to the measure $d\mathbb{P}\otimes
dt$) where each equivalence class contains an d-dimensional jointly
measurable stochastic process $\displaystyle{\varphi_{t};
t\in[0,T]}$, such that: for all $\mu>0$
\begin{description}
\item $(i)$ $\displaystyle{\|\varphi\|^{2}_{M^{2}}=\E\int ^{T}_{0}e^{\mu A_t}|\varphi_{t}|^{2}dt<\infty}$;

\item $(ii)$ $\varphi_t$ is ${\mathcal{F}}_{t}$-measurable , for any $t\in [0,T]$.
\end{description}
\item  $S^{2}([0,T],\R)$ is the set of one
dimensional continuous stochastic processes which verify: for all $\mu$
\begin{description}
\item $(iii)$ $\displaystyle{\|\varphi\|^{2}_{S^{2}}=\E\left(\sup_{0\leq t\leq T}e^{\mu A_t}|\varphi_{t}|
^{2}+\int^T_0e^{\mu A_s}|\varphi_s|^{2}dA_s\right)<\infty}$;

\item $(iv)$ $\varphi_t$ is ${\mathcal{F}}_{t}$-measurable , for any $t\in [0,T]$.
\end{description}
\end{enumerate}
In addition, we give the following assumptions on the data $(\xi,f,g,\phi, S)$:
\begin{itemize}
\item[$({\bf H1})$] $\xi$ is an $\mathcal{F}_{T}$-measurable, square integrable random variable; such  that for all $\mu>0$
$$ \mathbb{E}\left(e^{\mu A_T}|\xi|^2\right) < \infty. $$
\item[$({\bf H2})$]
$f:\Omega\times [0,T]\times\R\times\R^{d}\rightarrow \R$,\
$g:\Omega\times [0,T]\times\R\times\R^{d}\rightarrow \R^{\ell},$\
and $\phi:\Omega\times [0,T]\times\R \rightarrow \R,$ are three
functions verifying:
\begin{itemize}
  \item[$(a)$] There exist $\mathcal{F}_t$-measurable processes $\{f_t,\,
\phi_t,\,g_t:\,0\leq t\leq T\}$ with values in $[1,+\infty)$ such that for any $(t,y,z)\in
[0,T]\times\R\times\R^{d}$, and any $\mu>0$, the following hypotheses
are satisfied for $K>0$:
\begin{eqnarray*}
\left\{
\begin{array}{l}
f(t,y,z),\, \phi(t,y),\,\mbox{and}\, g(t,y,z)\, \mbox{are}\, \mathcal{F}_t\mbox{-measurable processes},\\\\
|f(t,y,z)|\leq f_t+K(|y|+\|z\|),\\\\
|\phi(t,y)|\leq \phi_t+K|y|,\\\\
|g(t,y,z)|\leq g_t+K(|y|+\|z\|),\\\\
\displaystyle \E\left(\int^{T}_{0} e^{\mu
A_t}f_t^{2}dt+\int^{T}_{0}e^{\mu A_t}g_t^{2}dt+\int^{T}_{0}e^{\mu
A_t}\phi_t^{2}dA_t\right)<\infty.
\end{array}\right.
\end{eqnarray*}
\item[$(b)$] There exist constants $c>0,\, \beta<0$ and $0<\alpha<1$ such that for any $(y_1,z_1),\,(y_2,z_2)\in\R\times\R^{d}$,
\begin{eqnarray*}
\left\{
\begin{array}{l}
(i)\, |f(t,y_1,z_1)-f(t,y_2,z_2)|^{2}\leq c(|y_1-y_2|^{2}+\|z_1-z_2\|^{2}),\\\\
(ii)\, |g(t,y_1,z_1)-g(t,y_2,z_2)|^{2}\leq c|y_1-y_2|^{2}+\alpha\|z_1-z_2\|^{2},\\\\
(iii)\; \langle y_1-y_2,\phi(t,y_1)-\phi(t,y_2)\rangle\leq
\beta|y_1-y_2|^{2}.
\end{array}\right.
\end{eqnarray*}
\end{itemize}
\item[(${\bf H3}$)] The obstacle $\left\{  S_{t},0\leq t\leq T\right\}$,
is a continuous, $\mathcal{F}_{t}$-measurable,
real-valued process, satisfying for any $\mu>0$ $$\E\left(  \sup_{0\leq t\leq T}e^{\mu A_t}\left|
S^+_{t}\right| ^{2}\right)  <\infty,$$ and $S_{T}\leq\xi\ a.s.$
\end{itemize}
One of our main goals in this paper is the study of reflected generalized BDSDEs,
\begin{eqnarray}
Y_{t}&=&\xi+\int_{t}^{T}f(s,Y_{s},Z_{s})ds+\int_{t}^{T}\phi(s,Y_{s})dA_s+\int_{t}^{T}g(s,Y_{s},Z_{s})\,\overleftarrow{dB}_{s}\nonumber\\
&&-\int_{t}^{T}Z_{s}dW_{s}+K_{T}-K_t,\,\ 0\leq t\leq
T.\label{a1}
\end{eqnarray}
First of all let us give a definition of the solution of this BDSDEs.
\begin{definition}\label{def}
By a solution of the reflected generalized BDSDE $(\xi,f,\phi,g,S)$
we mean a triplet of processes  $(Y,Z,K)$, which satisfies (\ref{a1})
such that the following holds $\P$- a.s
\begin{description}
\item $(i)$ $(Y,Z)\in S^{2}([0,T];\R)\times M^{2}(0,T;\R^{d})$
\item  $(ii)$ the map $s\mapsto Y_s$ is continuous
\item  $(iii)$ $Y_{t}\geq S_{t},\,\,\,\ 0\leq t\leq T$,\,\
\item  $(iv)$ $K$ is an increasing process such that $K_{0}=0$\ %
and\ $\displaystyle \int_{0}^{T}\left( Y_{t}-S_{t}\right)
dK_{t}=0$.
\end{description}
\end{definition}

In the sequel, $C$ denotes a positive constant which may vary from
one line the other.
\subsection{Comparison theorem}
Let us give this needed comparison theorem related to the generalized
BDSDE associated to $(\xi, f,\phi,g)$ in the form
\begin{eqnarray*}
Y_{t}&=&\xi+\int_{t}^{T}f(s,Y_{s},Z_{s})ds+\int_{t}^{T}\phi(s,Y_{s})dA_s+\int_{t}^{T}g(s,Y_{s},Z_{s})\,\overleftarrow{dB}_{s}\nonumber\\
&&-\int_{t}^{T}Z_{s}dW_{s},\,\ 0\leq t\leq
T.
\end{eqnarray*}
It's proof follows with the same computation as in \cite{Yal}, with slight
modification due to the presence of the integral with respect the
increasing process $A$. So we just repeat the main step.
\begin{theorem}(Comparison theorem for generalized BDSDE)\label{Thm:comp}
Let $(Y,Z)$ and $(Y',Z')$ be the unique solution
of the non reflected generalized BDSDE associated to
$(\xi,f,\phi,g)$ and $(\xi',f',\phi',g)$ respectively. If $\xi\leq
\xi',\; f(t,Y'_t,Z'_t)\leq f'(t,Y'_t,Z'_t)$ and $\phi(t,Y'_t)\leq
\phi^{'}(t,Y'_t)$, then $Y_t\leq Y'_t,\; \forall\, t\in [0, T]$.
\end{theorem}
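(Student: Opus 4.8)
The plan is to follow the classical Itô-based comparison argument for BDSDEs, adapted to include the finite-variation term driven by $A$. Write $\bar Y = Y - Y'$, $\bar Z = Z - Z'$, and $\bar\xi = \xi - \xi' \le 0$. Subtracting the two equations gives
\begin{eqnarray*}
\bar Y_t &=& \bar\xi + \int_t^T \big(f(s,Y_s,Z_s)-f'(s,Y'_s,Z'_s)\big)\,ds + \int_t^T \big(\phi(s,Y_s)-\phi'(s,Y'_s)\big)\,dA_s\\
&& + \int_t^T \big(g(s,Y_s,Z_s)-g(s,Y'_s,Z'_s)\big)\,\overleftarrow{dB}_s - \int_t^T \bar Z_s\,dW_s.
\end{eqnarray*}
First I would apply Itô's formula (in the version appropriate to backward doubly SDEs, as in Pardoux--Peng \cite{PP3}) to $e^{\mu A_t}(\bar Y_t^+)^2$, or more conveniently to $e^{\mu A_t}|\bar Y_t^+|^2$, choosing the parameter $\mu>0$ large enough at the end. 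The point of the $e^{\mu A_t}$ weight is to absorb the drift contribution of the $dA_s$ integral using the one-sided (monotonicity) condition $(\textbf{H2})(b)(iii)$ together with $\beta<0$.

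The key algebraic steps on the set $\{\bar Y_s > 0\}$ are: decompose $f(s,Y_s,Z_s)-f'(s,Y'_s,Z'_s) = \big(f(s,Y_s,Z_s)-f(s,Y'_s,Z'_s)\big) + \big(f(s,Y'_s,Z'_s)-f'(s,Y'_s,Z'_s)\big)$, where the first bracket is controlled by the Lipschitz condition $(\textbf{H2})(b)(i)$ and the second is $\le 0$ by hypothesis; similarly split $\phi(s,Y_s)-\phi'(s,Y'_s) = \big(\phi(s,Y_s)-\phi(s,Y'_s)\big) + \big(\phi(s,Y'_s)-\phi'(s,Y'_s)\big)$ and estimate $\bar Y_s^+\big(\phi(s,Y_s)-\phi(s,Y'_s)\big) \le \beta (\bar Y_s^+)^2$ by $(\textbf{H2})(b)(iii)$, while the second bracket contributes $\le 0$. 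For the $\overleftarrow{dB}$-martingale part, taking expectations kills the forward $dW$ stochastic integral, but the backward Itô term produces, via its quadratic variation, a term $\E\int_t^T e^{\mu A_s}\mathbf 1_{\{\bar Y_s>0\}}|g(s,Y_s,Z_s)-g(s,Y'_s,Z'_s)|^2\,ds$, which by $(\textbf{H2})(b)(ii)$ is bounded by $\E\int_t^T e^{\mu A_s}(c(\bar Y_s^+)^2 + \alpha\|\bar Z_s\|^2\mathbf 1_{\{\bar Y_s>0\}})\,ds$. Crucially $0<\alpha<1$, so this $\alpha\|\bar Z_s\|^2$ term can be absorbed against the $\|\bar Z_s\|^2$ coming from the quadratic variation of the $dW$ integral in Itô's formula, leaving a strictly positive coefficient $(1-\alpha)$ on $\|\bar Z_s\|^2$ which we simply discard.

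Collecting everything, I would arrive at an inequality of the form
\begin{eqnarray*}
\E\left(e^{\mu A_t}(\bar Y_t^+)^2\right) + (1-\alpha)\,\E\int_t^T e^{\mu A_s}\|\bar Z_s\|^2\mathbf 1_{\{\bar Y_s>0\}}\,ds \le C\,\E\int_t^T e^{\mu A_s}(\bar Y_s^+)^2\,ds + (\mu\beta + c')\,\E\int_t^T e^{\mu A_s}(\bar Y_s^+)^2\,dA_s,
\end{eqnarray*}
(using $\bar\xi^+=0$), where $C$ and $c'$ depend only on the Lipschitz constants. Choosing $\mu$ large enough that $\mu\beta + c' \le 0$ (possible since $\beta<0$) makes the $dA_s$ term nonpositive, so it can be dropped; then Gronwall's lemma applied to $t\mapsto \E(e^{\mu A_t}(\bar Y_t^+)^2)$ forces $\E(e^{\mu A_t}(\bar Y_t^+)^2)=0$ for all $t$, hence $\bar Y_t^+ = 0$, i.e. $Y_t\le Y'_t$ for all $t\in[0,T]$, the continuity of both processes giving the conclusion for all $t$ simultaneously. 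The main obstacle I anticipate is purely technical: writing down the correct Itô formula for $e^{\mu A_t}|\bar Y_t^+|^2$ when the integrand involves both a forward and a backward Itô integral and a finite-variation term, and being careful that the function $y\mapsto (y^+)^2$ is only $C^1$ (not $C^2$) — one either uses the generalized Itô–Tanaka / Meyer–Itô formula and checks the local-time term has the right sign, or approximates $(y^+)^2$ by smooth convex functions and passes to the limit; this bookkeeping, rather than any conceptual difficulty, is where the work lies, and it is exactly the "slight modification due to the presence of the integral with respect to the increasing process $A$" alluded to before the statement.
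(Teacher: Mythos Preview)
Your proposal is correct and follows essentially the same argument as the paper's proof: apply It\^o's formula to $((\Delta Y_t)^+)^2$, split the $f$- and $\phi$-increments into a Lipschitz/monotone part plus a nonpositive part, absorb the $\alpha\|\Delta Z\|^2$ coming from the backward It\^o term using $\alpha<1$, and finish with Gronwall. The only cosmetic difference is that the paper does not introduce the weight $e^{\mu A_t}$: since $\beta<0$, the contribution $2\beta\int_t^T((\Delta Y_s)^+)^2\,dA_s$ is already nonpositive and can be discarded outright, so the extra parameter $\mu$ is unnecessary here.
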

\begin{proof}
Let us set $\Delta Y=Y-Y'$, $\Delta Z=Z-Z'$ and $(\Delta Y)^{+}=(Y-Y')^{+}$ (with $f^{+}=\sup\{f,0\}$).\newline Using It\^o's formula, we get, for all $0\leq t\leq T$
\begin{align}\label{eq:1}
&\mathbb{E}((\Delta Y_t)^+)^2+\mathbb{E}\int_t^T \|\Delta
Z_s\|^2{\bf 1}_{\{Y_s>Y'_s\}}ds
\nonumber\\
\leq &\E((\xi-\xi')^{+})^2+ 2\mathbb{E}\int_t^T
(\Delta Y_s)^+{\bf 1}_{\{Y_s>Y'_s\}}\left\{f(s,Y_s,Z_s)-f'(s,Y'_s,Z'_s)\right\}ds\nonumber\\
&+2\mathbb{E}\int_t^T
(\Delta Y_s)^+{\bf 1}_{\{Y_s>Y'_s\}}\left\{\phi(s,Y_s)-\phi'(s,Y'_s)\right\}dA_s\nonumber\\
 &+\mathbb{E}\int_t^T\left\|g(s,Y_s,Z_s)-g(s,Y'_s,Z'_s)\right\|^2{\bf 1}_{\{Y_s>Y'_s\}}ds,
\end{align}
where ${\bf 1}_{\Gamma}$ denotes the characteristic function of a
given set $\Gamma\in {\bf F}$ defined by

${\bf 1}_{\Gamma}(\omega)=\left\{
\begin{array}{l}
1\ \mbox{if}\ \omega\in \Gamma,\\
0\ \mbox{if}\ \omega\in \Gamma.
\end{array}\right.
$\newline
From $({\bf H2})(b)$, we have
\begin{eqnarray*}
2(\Delta Y_s)^+\left\{f(s,Y_s,Z_s)-f'(s,Y'_s,Z'_s)\right\}
&\leq & 2(\Delta Y_s)^+\left\{f(s,Y_s,Z_s)-f(s,Y'_s,Z'_s)\right\}\\
&\leq &(\frac{1}{\varepsilon}+2c)((\Delta
Y_s)^+)^2+\varepsilon c\|\Delta Z_s\|^2,
\end{eqnarray*}
\begin{eqnarray*}
2(\Delta Y_s)^+\left\{\phi(s,Y_s)-\phi'(s,Y'_s)\right\} &\leq &
2(\Delta Y_s)^+\left\{\phi(s,Y_s)-\phi(s,Y'_s)\right\}\\
&\leq & 2\beta((\Delta Y_s)^+)^2
\end{eqnarray*}

and
\begin{eqnarray*}
\left\|g(s,Y_s,Z_s)-g(s,Y'_s,Z'_s)\right\|^2{\bf 1}_{\{Y_s>Y'_s\}}
&\leq&c((\Delta Y_s)^+)^2{\bf 1}_{\{Y_s>Y'_s\}}+\alpha\|\Delta
Z_s\|^2{\bf 1}_{\{Y_s>Y'_s\}}.
\end{eqnarray*}
Plugging these inequalities in (\ref{eq:1}) and choosing
$\displaystyle \varepsilon=\frac{1-\alpha}{2c}$, we conclude that
$$ \mathbb{E}((\Delta Y_t)^+)^2\leq 0$$
which leads to $\Delta Y^+_t=0$ a.s. and so $Y'_t\geq Y_t$ a.s. for
all $t\leq T$.
\end{proof}
\subsection {Existence and uniqueness result}
Our main goal in this section is to prove the following theorem.
\begin{theorem}\label{Th:exis-uniq}
Under the hypotheses $(\bf H1)$-$({\bf H3})$, the reflected
generalized BDSDE \eqref{a1} has a unique solution $(Y,Z,K)$.
\end{theorem}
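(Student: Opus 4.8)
The plan is to follow the now-standard penalization scheme adapted to the doubly stochastic and "generalized" (integral against $dA$) setting. First I would treat uniqueness: given two solutions $(Y,Z,K)$ and $(Y',Z',K')$, apply It\^o's formula to $e^{\mu A_t}|Y_t-Y'_t|^2$, use the Lipschitz hypotheses $(\mathbf{H2})(b)(i)$--$(iii)$ on $f,g,\phi$ together with the dissipativity constant $\beta<0$ and the contraction constant $0<\alpha<1$ on $g$ in $z$, and absorb the $\|Z-Z'\|^2$ and $dA$-terms into the left-hand side by choosing $\mu$ large enough; the crucial point is that the Skorokhod minimality condition $(iv)$ forces $\int_0^T (Y_s-Y'_s)\,d(K_s-K'_s)\le 0$, so the reflection terms only help. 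A Gronwall argument then gives $Y\equiv Y'$, hence $Z\equiv Z'$ and $K\equiv K'$.

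For existence I would introduce, for each $n\ge 1$, the penalized generalized BDSDE
\[
Y^n_t=\xi+\int_t^T f(s,Y^n_s,Z^n_s)\,ds+\int_t^T\phi(s,Y^n_s)\,dA_s+\int_t^T g(s,Y^n_s,Z^n_s)\,\overleftarrow{dB}_s+n\int_t^T (Y^n_s-S_s)^-\,ds-\int_t^T Z^n_s\,dW_s,
\]
whose existence and uniqueness follows from the theory of generalized BDSDEs (as in \cite{Bal}) since $y\mapsto n(y-S_s)^-$ is Lipschitz and monotone; set $K^n_t=n\int_0^t (Y^n_s-S_s)^-\,ds$. The heart of the argument is then a sequence of a priori estimates, uniform in $n$: (a) a bound for $\|Y^n\|_{S^2}^2+\|Z^n\|_{M^2}^2+\E|K^n_T|^2$ obtained by It\^o's formula on $e^{\mu A_t}|Y^n_t|^2$, the linear-growth part of $(\mathbf{H2})(a)$, the integrability of $f_\cdot,g_\cdot,\phi_\cdot$ and of $\sup_t e^{\mu A_t}(S^+_t)^2$ from $(\mathbf{H3})$, Young's inequality and the estimate $2Y^n_s\,n(Y^n_s-S_s)^-\le 2 S_s\, n(Y^n_s-S_s)^-\le \varepsilon n^2((Y^n_s-S_s)^-)^2+\tfrac1\varepsilon (S^+_s)^2$ to control the penalization term; and (b) monotonicity in $n$: using the comparison principle (Theorem 2.2, applied to the penalized equations whose drifts are increasing in $n$) one gets $Y^n_t\le Y^{n+1}_t$, so $Y^n\uparrow Y$ pointwise.

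Finally I would pass to the limit. The monotone limit $Y$ together with the uniform bounds gives, via dominated/monotone convergence, $Y^n\to Y$ in $S^2$ (first weakly/in $M^2$, then upgrading using the standard "$Y^n$ increasing and continuous, limit continuous" argument \`a la Dini, or directly an $S^2$-estimate on $Y^n-Y^m$), and $Z^n\to Z$ in $M^2$, $K^n_t\to K_t$ weakly in $L^2$ with $K$ increasing and $K_0=0$; the key technical lemma here—and the step I expect to be the main obstacle—is showing $\E\big(\sup_{0\le t\le T}((Y^n_t-S_t)^-)^2\big)\to 0$, which yields $Y_t\ge S_t$, and then establishing the Skorokhod condition $\int_0^T (Y_t-S_t)\,dK_t=0$ by the usual argument that $\int_0^T (Y^n_t-S_t)\,dK^n_t=-n\int_0^T((Y^n_t-S_t)^-)^2\,dt\le 0$ combined with weak convergence of $dK^n$ and uniform convergence of $Y^n-S$ to the nonnegative $Y-S$. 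Passing to the limit in each term of the penalized equation (the forward and backward stochastic integrals converging in $L^2$ by the It\^o isometries together with the $M^2$-convergence of $Z^n$ and the Lipschitz bound on $g$, using $\alpha<1$) then shows $(Y,Z,K)$ solves \eqref{a1} and satisfies Definition 2.1, completing the proof.
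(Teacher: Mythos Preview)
Your plan applies the penalization scheme directly to the full equation with $g$ depending on $(Y,Z)$, whereas the paper proceeds in two stages. It first proves the result (Proposition~\ref{propo}) in the special case where $g=g(s)$ is a \emph{given} process, using precisely your penalization/comparison/Cauchy programme; then, for Theorem~\ref{Th:exis-uniq} itself, it freezes $(\bar Y,\bar Z)$ inside $g$, solves the resulting reflected generalized BDSDE via Proposition~\ref{propo}, and shows that the map $\Phi:(\bar Y,\bar Z)\mapsto (Y,Z)$ is a strict contraction on $S^2\times M^2$ under a weighted norm (the contraction constant being $\alpha/\alpha'<1$ for some $\alpha<\alpha'<1$). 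Uniqueness thus comes from the fixed point; your direct It\^o-formula uniqueness argument is correct and in fact more economical.

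The reason the paper takes this detour is exactly the step you flag as ``the main obstacle'', namely $\E\sup_t\big((Y^n_t-S_t)^-\big)^2\to 0$. In the paper's Step~2 this is achieved by the shift $\overline{Y}^n_t=Y^n_t+\int_0^t g(s)\,\overleftarrow{dB}_s$, $\overline{S}_t=S_t+\int_0^t g(s)\,\overleftarrow{dB}_s$, which removes the backward integral and allows comparison with an auxiliary \emph{linear} penalized process whose solution admits an explicit $\mathcal G_\nu$-conditional-expectation representation (equation~\eqref{c'2}). This manoeuvre needs the shifted obstacle $\overline S$ to be a single, $n$-independent process, which holds only when $g$ does not depend on $(Y^n,Z^n)$; with $g=g(s,Y^n_s,Z^n_s)$ the shift would produce an $n$-dependent barrier and the argument does not close. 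So your direct route is not wrong in principle, but the gap you yourself identify is real and is not filled by the paper's techniques; carrying it through would require a genuinely different device at that point, and the paper's fixed-point decomposition is designed precisely to sidestep it.
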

Before we start proving this theorem, let us establish the same result in case
$g$ do not depends on $Y$ and $Z$.
More precisely, given $g$ such that $$\E\left(\int_0^Te^{\mu A_s}\|g(s)\|^2ds\right)<\infty$$ and $f,\; \phi$ and $\xi$ as above, consider the reflected generalized BDSDE
\begin{eqnarray}
Y_{t}&=&\xi+\int_{t}^{T}f(s,Y_s,Z_s)ds+
\int_{t}^{T}\phi(s,Y_s)dA_s+\int_{t}^{T}g(s)\,\overleftarrow{dB}_{s}-\int_{t}^{T}Z_{s}dW_{s}+K_T-K_t. \nonumber\\
&&\label{h4}
\end{eqnarray}
\begin{proposition}\label{propo}
There exists a unique triplet $(Y,Z,K)$ verifies conditions $(i)$-$(iv)$ of definition \ref{def} and satisfies \eqref{h4}.
\end{proposition}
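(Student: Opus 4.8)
Here is how I would prove Proposition~\ref{propo}.

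The plan is to use the penalization method. For each integer $n\geq 1$, introduce the (non-reflected) generalized BDSDE
\begin{eqnarray*}
Y^n_t &=& \xi + \int_t^T f(s,Y^n_s,Z^n_s)\,ds + \int_t^T \phi(s,Y^n_s)\,dA_s + \int_t^T g(s)\,\overleftarrow{dB}_s \\
&& + n\int_t^T (Y^n_s - S_s)^-\,ds - \int_t^T Z^n_s\,dW_s, \qquad 0\leq t\leq T,
\end{eqnarray*}
and set $K^n_t = n\int_0^t (Y^n_s - S_s)^-\,ds$. The augmented driver $(s,y,z)\mapsto f(s,y,z)+n(y-S_s)^-$ is globally Lipschitz in $(y,z)$ and, since $({\bf H3})$ controls $S^+$, it inherits the integrability and linear-growth conditions of $({\bf H2})$; hence the classical existence and uniqueness theory for generalized backward doubly SDEs provides a unique pair $(Y^n,Z^n)\in S^{2}([0,T];\R)\times M^{2}(0,T;\R^{d})$, and $K^n$ is then continuous, increasing, ${\bf F}$-adapted with $K^n_0=0$.

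First I would establish a priori bounds that are uniform in $n$. Applying It\^o's formula to $e^{\mu A_t}|Y^n_t|^2$ and using the growth bounds of $({\bf H2})(a)$, the Lipschitz condition $({\bf H2})(b)(i)$, the monotonicity $({\bf H2})(b)(iii)$ with $\beta<0$ to absorb the $dA$-integral, the hypothesis $\E\int_0^T e^{\mu A_s}\|g(s)\|^2\,ds<\infty$, and then Young's and the Burkholder--Davis--Gundy inequalities with $\mu$ chosen large enough, I obtain
$$\E\Big(\sup_{0\leq t\leq T}e^{\mu A_t}|Y^n_t|^2+\int_0^T e^{\mu A_s}\|Z^n_s\|^2\,ds+|K^n_T|^2\Big)\leq C,$$
with $C$ independent of $n$ (the estimate on $K^n_T$ being read off from the equation once those on $Y^n$ and $Z^n$ are in hand).

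Next I would pass to the limit. Since $n(y-S_s)^-\leq (n+1)(y-S_s)^-$, the comparison theorem (Theorem~\ref{Thm:comp}) yields $Y^n_t\leq Y^{n+1}_t$ for all $t$, a.s.; together with the uniform bound this produces a process $Y\in S^{2}$ with $Y^n\uparrow Y$ and $Y^n\to Y$ in $L^{2}(d\overline{\P}\otimes dt)$. Applying It\^o to $e^{\mu A_t}|Y^n_t-Y^m_t|^2$ and estimating the cross term $\int_0^T(Y^n_s-Y^m_s)(dK^n_s-dK^m_s)$ in the usual way --- bounding it by $\int_0^T(Y^n_s-S_s)^-\,dK^m_s+\int_0^T(Y^m_s-S_s)^-\,dK^n_s$ and combining $\E|K^n_T|^2\leq C$ with the fact that $\E\sup_t((Y^n_t-S_t)^-)^2\to 0$ --- shows that $(Y^n)$ is Cauchy in $S^{2}$ and $(Z^n)$ is Cauchy in $M^{2}$, with limits $Y$ and $Z$; rewriting the penalized equation then gives $K^n\to K$ in $S^{2}$ for a continuous, increasing, ${\bf F}$-adapted process $K$ with $K_0=0$, and $(Y,Z,K)$ satisfies \eqref{h4}.

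It then remains to verify $(i)$--$(iv)$ of Definition~\ref{def}. Items $(i)$ and $(ii)$ follow from the above convergences, and $(iii)$ (i.e.\ $Y_t\geq S_t$) from $\E\sup_t((Y^n_t-S_t)^-)^2\to 0$. For the minimality condition $(iv)$, one has $\int_0^T(Y^n_s-S_s)\,dK^n_s=-n\int_0^T((Y^n_s-S_s)^-)^2\,ds\leq 0$; letting $n\to\infty$ and using the uniform convergence of $Y^n-S$ and of $K^n$ together with $\sup_n\E|K^n_T|^2<\infty$ gives $\int_0^T(Y_s-S_s)\,dK_s\leq 0$, while $Y\geq S$ and $K$ increasing give the reverse inequality, hence $\int_0^T(Y_s-S_s)\,dK_s=0$. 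Uniqueness follows classically, by It\^o's formula applied to the squared difference of two solutions together with the Lipschitz/monotonicity assumptions and the minimality condition. I expect the main obstacle to be the convergence analysis: making the a priori constants genuinely independent of $n$ (careful choice of $\mu$ and use of $\beta<0$ to control the $dA$-term), and especially the proof that $\E\sup_t((Y^n_t-S_t)^-)^2\to0$, which is what guarantees that the penalized solutions converge to a triplet that both dominates the obstacle and satisfies the Skorokhod condition; this is handled by adapting the classical penalization scheme of El Karoui et al.\ for reflected BSDEs to the present framework involving the increasing process $A$.
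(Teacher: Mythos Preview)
Your proposal is correct and follows essentially the same route as the paper: penalization, comparison to get monotonicity of $(Y^n)$, uniform a~priori estimates via It\^o on $e^{\mu A_t}|Y^n_t|^2$, the Cauchy argument for $(Y^n,Z^n)$ driven by $\E\sup_t((Y^n_t-S_t)^-)^2\to 0$, and uniqueness by It\^o on the squared difference. The only point worth flagging is the step you rightly single out as the main obstacle: to prove $\E\sup_t((Y^n_t-S_t)^-)^2\to 0$ the paper does not apply El~Karoui et~al.\ verbatim but first removes the backward integral by the shift $\overline{Y}^n_t=Y^n_t+\int_0^t g(s)\,\overleftarrow{dB}_s$ (and similarly for $S$), then compares with an auxiliary linear BSDE and uses $\mathcal{G}_t$-stopping times (since $(\mathcal{F}_t)$ is not a filtration) together with the section theorem; this is exactly the ``adaptation'' you allude to, so your outline is on target.
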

\begin{proof}
\noindent{\bf Existence}\newline
It is based on a slight adaptation of the penalization method taking account the presence of the backward Itô integral with respect the
Brownian motion $B$.
For each $n\in\N^{*}$, we set
\begin{eqnarray}\label{a0}
f_{n}(s,y,z)=f(s,y,z)+n(y-S_{s})^{-}
\end{eqnarray}
and consider the generalized BDSDE
\begin{eqnarray}
Y_{t}^{n}&=&\xi+\int_{t}^{T}f_n(s,Y_s^n,Z_s^n)ds+
\int_{t}^{T}\phi(s,Y_{s}^{n})dA_s\nonumber\\
&&+\int_{t}^{T}g(s)\,\overleftarrow{dB}_{s}-\int_{t}^{T}Z_{s}^{n}dW_{s}.
\label{h3}
\end{eqnarray}
It is well known (see Boufoussi et al., \cite{Bal}) that generalized BDSDE \eqref{h3} has a
unique solution $(Y^{n},Z^{n})\in S^{2}([0,T];\R)\times M^{2}(0,T;\R^{d})$ such that for each
$n\in\N^{*}$,
\begin{eqnarray*}
\E\left(\sup_{0\leq t\leq
T}e^{\mu A_t}|Y^{n}_{t}|^{2}+\int_0^Te^{\mu A_s}\left\|Z_s^{n}\right\|^2 ds
\right)<\infty.
\end{eqnarray*}
On the other hand, for all $n\geq 0$ and $s,y,z)\in[0, T]\times\R\times\R^d$,
$$f_n(s,y,z)\leq f_{n+1}(s,y,z),$$
which provide by Theorem \ref{Thm:comp}, $Y^n_t\leq Y^{n+1}_t, \, t\in[0,T]$ a.s.
Therefore,  setting  $Y_{t}=\sup_{n}Y_t^n$ we have $$Y_{t}^{n}\nearrow Y_{t}\;\; \mbox{a.s.}$$
{\it Step 1: A priori estimate}\newline
For any $\mu>0$, there exists
$C>0$ such that,
\begin{eqnarray*}
\sup_{n\in\N^{*}}\E\left( \sup_{0\leq t\leq T}e^{\mu A_t}\left|
Y_{t}^{n}\right| ^{2}+\int_{0}^{T}e^{\mu A_s}\left| Y_{s}^{n}\right| ^{2}
dA_s+\int_{0}^{T}e^{\mu A_s}\left\| Z_{s}^{n}\right\| ^{2}
ds+|K^{n}_{T}|^{2}\right)<C
\end{eqnarray*}
where
\begin{eqnarray} K_{t
}^{n}=n\int_{0}^{t}(Y_{s}^{n}-S_{s})^{-}ds , \,\ 0\leq t\leq
T.\label{K}
\end{eqnarray}
Indeed, from It\^{o}'s formula it follows that
\begin{align}\label{b2}
& e^{\mu A_t}\left| Y_{t}^{n}\right|^{2}+\int_{t}^{T}e^{\mu A_s}\left\| Z_{s}^{n}\right\|
^{2}ds \nonumber\\
& \leq e^{\mu A_T}\left| \xi\right| ^{2}+2\int_{t}^{T}e^{\mu
A_s}Y_{s}^{n}f(s,Y_{s}^{n},Z_{s}^{n}) ds
+2\int_{t}^{T}e^{\mu A_s}Y_{s}^{n}\phi(s,Y_{s}^{n})dA_s-\mu\int_{t}^{T}e^{\mu A_s}|Y_{s}^{n}|^2dA_s\nonumber\\
& +\int_{t}^{T}e^{\mu
A_s}\|g(s)\|^{2}ds+2\int_{t}^{T} e^{\mu
A_s}S_{s}dK_{s}^{n}+ 2\int_{t}^{T}e^{\mu A_s}\langle Y_{s}^{n}
,g(s,Y_{s}^{n},Z_{s}^{n})\overleftarrow{dB}_{s}\rangle\nonumber\\
& -2\int_{t}^{T}e^{\mu A_s}\langle
Y_{s}^{n},Z_{s}^{n}dW_{s}\rangle,
\end{align}
where we have used
$\displaystyle{\int_{t}^{T}e^{\mu A_s}(Y^{n}_{s}-S_{s})dK^{n}_{s}\leq 0}$ and
the fact that
\begin{eqnarray*}
\int_{t}^{T}e^{\mu A_s}Y^{n}_{s}dK^{n}_{s}&=&
\int_{t}^{T}e^{\mu A_s}(Y^{n}_{s}-S_{s})dK^{n}_{s}+\int_{t}^{T}e^{\mu A_s}S_{s}dK^{n}_{s}\leq
\int_{t}^{T}e^{\mu A_s}S_{s}dK^{n}_{s}.
\end{eqnarray*}
Using $({\bf H2})$ and the elementary inequality $2ab\leq \gamma
a^{2}+\frac{1}{\gamma} b^2,\ \forall\gamma>0$,
\begin{eqnarray*}
2Y_{s}^{n}f(s,Y_{s}^{n},Z_{s}^{n})&\leq&(c\gamma_1+\frac{1}{\gamma_1})|Y^{n}_{s}|^{2}
+2c\gamma_1\|Z^{n}_{s}\|^{2}+2\gamma_1 f_s^{2},\\
2Y_{s}^{n}\phi(s,Y_{s}^{n})&\leq& (\gamma_2-2|\beta|)|Y^{n}_{s}|^{2}+\frac{1}{\gamma_2}\phi_s^{2}.
\end{eqnarray*}
Taking expectation in both sides of the inequality (\ref{b2}) and
choosing $\displaystyle{\gamma_1=\frac{1-\alpha}{6c}}$ and
$\displaystyle\gamma_2-\mu=|\beta|$ we obtain for all
$\varepsilon >0$
\begin{align}\label{a2}
&
\E (e^{\mu A_t}\left|Y_{t}^{n}\right|^{2})+|\beta|\E\int_{t}^{T}e^{\mu A_s}\left|Y_{s}^{n}\right|^{2}dA_s
+\frac{1-\alpha}{6}\E\int_{t}^{T}e^{\mu A_s}\left\|Z_{s}^{n}\right\|^{2}ds \nonumber\\
&\leq
C\E\left\{e^{\mu A_T}|\xi|^{2}+\int_t^Te^{\mu A_s}|Y^{n}_{s}|^{2}ds+\int_t^Te^{\mu A_s}f_s^{2}ds+
\int_t^Te^{\mu A_s}\phi_s^{2}dA_s+\int_t^Te^{\mu A_s}\|g(s)\|^{2}ds\right\}\nonumber\\
& +\frac{1}{\varepsilon}\E\left(\sup_{0\leq s\leq
T}(e^{\mu A_s}S_{s}^+)^2\right)+\varepsilon\E \left(K_T^n-K_{t}^{n}\right)^2.
\end{align}
On the other hand, we get from (\ref{h3}) that for all $0 \leq t
\leq T$,
\begin{equation}\label{K:n}
K_T^n-K_t^n=Y_t^n-\xi-\int_t^T f(s,Y_s^n,Z_s^n) ds -\int_t^T
\phi(s,Y_s^n) dA_s-\int_t^T g(s) \overleftarrow{dB}_s+\int_t^T Z_s^n
\ d W_s.
\end{equation}
Then we have
\begin{align}\label{estK}
\E(K^n_T-K^n_t)^2\leq 6\E\left\{e^{\mu A_T}|\xi|^{2}+e^{\mu A_t}|Y^{n}_t|^2+\left|\int_t^Tf(s,Y^{n}_s,Z^n_s)ds\right|^{2}\right.\nonumber\\
\left.+\left|\int_t^T\phi(s,Y^n_s)dA_s\right|^{2}+\left|\int_t^Tg(s)\overleftarrow{dB}_s\right|^{2}+\left|\int_t^TZ^{n}_sdW_s\right|^{2}\right\}.
\end{align}
It follows by H\"{o}lder inequality and the isometry equality, together with assumptions $({\bf H2})(a)$ that
\begin{eqnarray*}
\left|\int_t^Tf(s,Y^{n}_s,Z^n_s)ds\right|^{2}\leq
3(T-t)\int_t^Te^{\mu
A_s}(f_s^2+K^2|Y^{n}_s|^{2}+K^2\|Z^{n}_s\|^{2})ds,
\end{eqnarray*}
and
\begin{eqnarray*}
\E\left|\int_t^TZ^{n}_sdW_s\right|^{2}\leq \E\int_t^Te^{\mu A_s}|Z^{n}_s|^{2}ds.
\end{eqnarray*}
Next, to estimate  $\left|\int_t^T\phi(s,Y^n_s)dA_s\right|^{2}$,
let us assume first that $A_T$ is a bounded  real variable. For any $\mu>0$ given in assumptions $({\bf
H1})$ or $({\bf H2})(a)$, we have
\begin{eqnarray*}
\left|\int_t^T\phi(s,Y^{n}_s)dA_s\right|^{2}&\leq& \left(\int_t^Te^{-\mu A_s}dA_s\right)\left(\int_t^Te^{\mu A_s}|\phi(s,Y^{n}_s)|^{2}dAs\right)\nonumber\\
&\leq& \frac{2}{\mu}\int_t^Te^{\mu A_s}(\phi_s^2+K^2|Y^{n}_s|^{2})dA_s,\label{psi}
\end{eqnarray*}
since
\begin{eqnarray*}
\left(\int_t^Te^{-\mu A_s}dA_s\right)\leq \frac{1}{\mu}[1-e^{-\mu A_T}]\leq \frac{1}{\mu}.
\end{eqnarray*}
The general case then follows from Fatou's lemma.

Therefore, from $(\ref{estK})$ together with the previous
inequalities, there exists a constant independent of $A_T$ such that
\begin{align}\label{kn1}
\E(K_T^n-K^n_t)^2&\leq C\E\left\{e^{\mu A_T}|\xi|^{2}+e^{\mu
A_t}|Y^{n}_t|^2+\int_t^Te^{\mu A_s}f_s^{2}ds+ \int_t^Te^{\mu
A_s}\phi_s^{2}dA_s+\int_t^Te^{\mu A_s}\|g(s)\|^{2}ds\right.\nonumber\\
&\left.+\int_t^T e^{\mu
A_s}\left|Y_s^n\right|^2ds+\E\left(\sup_{0\leq s\leq t}e^{\mu
A_s}(S_{s}^+)^2\right)+\int_t^T e^{\mu A_s}
\left|Y_s^n\right|^2dA_s+\int_t^T e^{\mu A_s}\|Z_s^n\|^2ds\right\}.
\end{align}
Recalling again $(\ref{a2})$ and taking $\varepsilon$ small enough
such that $\varepsilon C<\min\{1,|\beta|,\frac{1-\alpha}{6}\}$, we
obtain
\begin{align*}
&
\E\left( e^{\mu A_t}\left|Y_{t}^{n}\right|^{2}+\int_{t}^{T}e^{\mu A_s}\left|Y_{s}^{n}\right|^{2}dA_s
+\int_{t}^{T}e^{\mu A_s}\left\|Z_{s}^{n}\right\|^{2}ds\right) \\
&\leq
C\E\left\{e^{\mu A_T}|\xi|^{2}+\int_t^Te^{\mu A_s}|Y^{n}_{s}|^{2}ds+\int_t^Te^{\mu A_s}f_s^{2}ds+
\int_t^Te^{\mu A_s}\phi_s^{2}dA_s\right.\\
&\left.+\int_t^Te^{\mu A_s}\|g(s)\|^{2}ds
+\left(\sup_{0\leq s\leq T}e^{\mu A_s}(S_{s}^+)^2\right)\right\}
\end{align*}
Consequently, it follows from Gronwall's lemma and (\ref{kn1}) that
\begin{align*}
&
\E\left\{e^{\mu A_t}|Y_{t}^{n}|^{2}+\int_{t}^{T}e^{\mu A_s}\left|Y_{s}^{n}\right|^{2}dA_s
+\int_{t}^{T}e^{\mu A_s}\|Z_{s}^{n}\|^{2}ds+|K_{T}^{n}-K^n_t|^{2}\right\}\\
&\leq
C\E\left\{e^{\mu A_T}|\xi|^{2}+\int_t^Te^{\mu A_s}f_s^{2}ds+\int_t^Te^{\mu A_s}\phi_s^{2}dA_s
+\int_t^Te^{\mu A_s}\|g(s)\|^{2}ds+\sup_{0\leq t\leq T}e^{\mu A_t}(S_{t}^{+})^{2}\right\}.
\end{align*}
Finally, by  application of Burkholder-Davis-Gundy inequality we obtain from $(\ref{b2})$
\begin{eqnarray*}
\E\left\{\sup_{0\leq t\leq T}
e^{\mu A_t}|Y_{t}^{n}|^{2}+\int_{t}^{T}e^{\mu A_s}\|Z_{s}^{n}\|^{2}ds+|K_{T}^{n}|^{2}\right\}
&\leq & C\E\left\{e^{\mu A_T}|\xi|^{2}+\int_t^Te^{\mu A_s}f_s^{2}ds+\int_t^Te^{\mu A_s}\phi_s^{2}dA_s\right.\nonumber\\
&&+\left.\int_t^Te^{\mu A_s}\|g(s)\|^{2}ds+\sup_{0\leq t\leq
T}e^{\mu A_t}(S_{t}^{+})^{2}\right\},
\end{eqnarray*}
which end the step.

{\it Step 2}: For each $n\in\N^{*}$,
\begin{eqnarray*}
\E\left(\sup_{0\leq t\leq T }\left|\left( Y_{t}^{n}-S_{t}\right)
^{-}\right| ^{2}\right)\longrightarrow 0,\,\,\ \mbox{as}\,\ n
\longrightarrow \infty.
\end{eqnarray*}
Indeed, since $Y^{n}_{t}\geq Y^{0}_{t}$, we can w.l.o.g. replace $S_{t}$ by
$S_{t}\vee Y^{0}_{t}$, i.e. we may assume that\\ $\E(\sup_{0\leq
t\leq T}e^{\mu A_t}S_{t}^{2})<\infty$. We want to compare a.s. $Y_t$ and $S_t$
for all $t\in[0,T]$, while we do not know yet if $Y$ is a.s.
continuous.

In fact, denoting
$$
\left\{
\begin{array}{ll}
&\displaystyle \overline{\xi}:=\xi+\int_{0}^{T}g\left(
s\right)  \overleftarrow{dB}_{s}\\
& \displaystyle\overline{S}_{t}:=S_{t}+\int_{0}^{t}g\left(
s\right)  \overleftarrow{dB}_{s}\\
& \displaystyle\overline{Y}_{t}^{n}:=Y_{t}^{n}+\int_{0} ^{t}g\left(
s\right)  \overleftarrow{dB}_{s}
\end{array}
\right.
$$
we have
\begin{equation}\label{Ynbar}
\overline{Y}_{t}^{n}=\overline{\xi}+\int_{t}^{T}f\left(
s,Y_s^n,Z_s^n\right) ds+n\int_{t}^{T}\left(\overline{Y}_{s}^{n}
-\overline{S_{s} }\right) ^{-}ds+\int_{t}^{T}\phi\left(s,
Y_s^n\right) dA_s-\int_{t}^{T}Z_{s}^{n} dW_{s}.
\end{equation}
Set $\sup_{n}\overline{Y}_{t}^{n}=\overline{Y}_{t}$; then $Y_t=\overline{Y}_{t}-\int^{t}_{0}g(s)dB_s$.

Let
\begin{eqnarray*}
\widetilde{Y}_{t}^{n} &=&\overline{S}_{T}+\int_{t}^{T}f\left(
s,Y_s^n,Z_s^n\right)
ds+n\int_{t}^{T}(\overline{S}_{s}-\widetilde{Y}_{s}^{n})ds
+\int_{t}^{T}\phi\left(s, Y_s^n\right)
dA_s-\int_{t}^{T}\widetilde{Z} _{s}^{n}dW_{s}.
\end{eqnarray*}
Since $\overline{S}_T\leq \overline{\xi}$, then Theorem \ref{Thm:comp} show that, for $t\in[0,T],\, \widetilde{Y}_{t}^{n}\leq\overline{Y}^{n}_t$ a.s.

Let $\sigma$ be a $\mathcal{G}_{t}$-stopping time, and put $\nu=\sigma\wedge T$. The sequence $(\widetilde{Y}_n)$ satisfies then the equality
\begin{eqnarray}
\widetilde{Y}_{\nu }^{n} &=&\E\left\{ e^{-n(T-\nu)}\overline{S}_{T}
+\int^{ T}_{\nu }e^{-n(s-\nu)}f(s,Y_{s}^{n},Z_{s}^{n})ds+n\int^{ T}_{\nu }e^{-n(s-\nu)}\overline{S}_{s}ds \right.\nonumber \\
&&\left.+\int^{ T}_{\nu }e^{-n(s-\nu)}\phi(s,Y_{s}^{n})dA_s\mid
{\cal G} _{\nu }\right\}. \label{c'2}
\end{eqnarray}
First, according to previous work (see \cite{Kal}), $\displaystyle{e^{-n(T-\nu)}\overline{S}_{T}+n\int^{T}_{\nu }e^{-n(s-\nu)}\overline{S}_{s}ds}$ converge to $\overline{S}_{\nu}$ a.s. and in $L^2(\overline{\Omega})$ and it conditional expectation converges also
in $L^2(\overline{\Omega})$.

Moreover,
\begin{eqnarray*}
\E\left(\int^{ T}_{\nu }e^{-n(s-\nu)}f(s,Y_{s}^{n},Z_{s}^{n})ds\right)^{2}&\leq&
\frac{1}{2n}\E\left( \int_{0 }^{T}|
f(s,Y_{s}^{n},Z_{s}^{n})|^{2} ds\right)\\
&\leq& \frac{C}{2n}\E\left( \int_{0}^{T}
e^{\mu A_s}(f_s^{2}+|Y_{s}^{n}|^{2}+\|Z_{s}^{n}\|^{2}) ds\right),
\end{eqnarray*}
and
\begin{eqnarray*}
\E\left(\int^{T}_{\nu}e^{-n((s-\nu)}\phi(s,Y_{s}^{n})dA_s\right)^{2}
&\leq& \E\left[\left(\int^{T}_{\nu}e^{-[2n(s-\nu)+\mu A_{s}]}dA_s\right)\left(
\int^{T}_{\nu}e^{\mu A_s}| \phi(s,Y_{s}^{n})|^{2} dA_s\right)\right]\\
&\leq &\frac{1}{\mu}(1-e^{-\mu A_T})\E\left(
\int_{0}^{T}e^{\mu A_s}(\phi_s^2+K|Y_{s}^{n}|^{2}) dA_s\right).
\end{eqnarray*}
Hence applying Lebesgue dominated Theorem
\begin{eqnarray*}
\E\left[\int^{ T}_{\nu }e^{-n(s-\nu)}f(s,Y_{s}^{n},Z_{s}^{n})ds+\int^{T}_{\nu}e^{-n(\nu-s)}\phi(s,Y_{s}^{n})dA_s|\mathcal{G}_{\nu}\right]\rightarrow 0
\end{eqnarray*}
in $L^{2-\delta}(\overline{\Omega})$, for $\delta >0$ arbitrary taken as $n\rightarrow 0$, which implies that this convergence follows in $L^{1}(\overline{\Omega})$.

Consequently,
\begin{eqnarray*}
\widetilde{Y}_{\nu }^{n}\rightarrow \overline{S}_{\nu}\,\;\;\;\;\;\mbox{in}\,\;\;\;\;\;L^{1}(\overline{\Omega})\, \;\;\;\;\mbox{as}\,\;\;\; \;\;\;\;\; n\rightarrow\infty.
\end{eqnarray*}
Therefore,
${Y}_{\nu}\geq {S}_{\nu}$ a.s. From this and the section theorem (see \cite{DM}, p.220) we deduce that $Y_t\geq S_t$ for every $t\in[0,T ]$ and then $(Y_{t}^{n}-S_{t})^{-}$ converge to zero, a.s., for $t\in[0,T]$. Since $\displaystyle{(Y_{t}^{n}-S_{t})^{-}\leq (S_{t}-Y_{t}^{0})^{+}\leq
\left| S_{t}\right| +\left| Y_{t}^{0}\right|}$, the dominated
convergence theorem ensures that
\begin{eqnarray*}
\lim_{n\longrightarrow +\infty }\E(\sup_{0\leq t\leq T }|\left(
Y_{t}^{n}-S_{t}\right) ^{-}| ^{2})=0.
\end{eqnarray*}
{\it Step 3: Convergence result}\newline
Recalling that $Y_{t}^{n}\nearrow Y_{t}$ $a.s$,
Fatou's lemma and Step 1 ensure
\begin{eqnarray*}
\E\left( \sup_{0\leq t\leq T }e^{\mu A_t}\left| Y_{t}\right| ^{2}\right)
<+\infty.
\end{eqnarray*}
It then follows by dominated convergence that
\begin{eqnarray}
\E\left( \int_{0}^{T }\left| Y_{s}^{n}-Y_{s}\right| ^{2} ds
\right)\longrightarrow 0,\,\,\ \mbox{as}\,\  n\rightarrow
\infty.\label{b9}
\end{eqnarray}

Next, the sequence of processes\ $Z^n$\ converges
in $M^{2}(0,T;\R^{d})$. Indeed, for\ $n \geq p \geq 1$,
It\^{o}'s formula provide
\begin{eqnarray*}
&&\left| Y_{t}^{n}-Y_{t}^{p}\right| ^{2}+\int_{t }^{T }\|
Z_{s}^{n}-Z_{s}^{p}\| ^{2}ds  \nonumber \\
&=&2\int_{t }^{T
}(Y_{s}^{n}-Y_{s}^{p})[f(s,Y_{s}^{n},Z_{s}^{n})-f(s,Y_{s}^{p},Z_{s}^{p})]ds+2\int_{t }^{T
}
(Y_{s}^{n}-Y_{s}^{p})[\phi(s,Y_{s}^{n})-\phi(s,Y_{s}^{p})]dA_s\\
&&-2\int_{t}^{T}\langle Y_{s}^{n}-Y_{s}^{p}
,[Z_{s}^{n}-Z_{s}^{p}]dW_{s}\rangle+2\int_{t}^{T}(Y_{s}^{n}-Y_{s}^{p})(dK_{s}^{n}-dK_{s}^{p})
. \label{b10}
\end{eqnarray*}
From the same step as before, by using again assumptions
$({\bf H2})$, there exists a constant $C>0$, such that
\begin{eqnarray*}
&&\E\left\{\left| Y_{t}^{n}-Y_{t}^{p}\right|^{2}+
\int_{t}^{T}\left| Y_{s}^{n}-Y_{s}^{p}\right| ^{2}dA_s +
\int_{t}^{T}\left\|
Z_{s}^{n}-Z_{s}^{p}\right\| ^{2}ds\right\} \\
&\leq &
C\E\left\{\int_{t}^{T}|Y_{s}^{n}-Y_{s}^{p}|^{2}ds+\sup_{0\leq s\leq
T}\left(Y_{s}^{n}-S_{s}\right)^{-}K_{T}^{p} +\sup_{0\leq s\leq
T}\left( Y_{s}^{p}-S_{s}\right) ^{-} K_{T}^{n}\right\},
\end{eqnarray*}
which, by Gronwall lemma, H\"{o}lder inequality and Step 1 implies
\begin{eqnarray*}
&&\E\left\{\left| Y_{t}^{n}-Y_{t}^{p}\right|^{2} +\int_{t}^{T}\left| Y_{s}^{n}-Y_{s}^{p}\right| ^{2}dA_s+ \int_{t}^{T}\left\|
Z_{s}^{n}-Z_{s}^{p}\right\| ^{2}ds\right\}\nonumber\\ &\leq &
C\left\{\E\left(\sup_{0\leq s\leq T}|\left( Y_{s}^{n}-S_{s}\right)
^{-}|^{2}\right)\right\}^{1/2}+C\left\{ \E\left(\sup_{0\leq s\leq T}|\left(
Y_{s}^{p}-S_{s}\right) ^{-}|^{2}\right)\right\}^{1/2}. \label{b11'}
\end{eqnarray*}
Finally, from Burkh\"{o}lder-Davis-Gundy's inequality, we obtain
\begin{eqnarray*}
\E\left( \sup_{0\leq s\leq T}\left| Y_{s}^{n}-Y_{s}^{p}\right|
^{2}+\int_{t}^{T}\left| Y_{s}^{n}-Y_{s}^{p}\right| ^{2}dA_s+\int_{t}^{T}\left\| Z_{s}^{n}-Z_{s}^{p}\right\| ^{2}ds\right)
\longrightarrow 0,\,\, \mbox{ as }\,  n,p\longrightarrow
\infty,\label{b12}
\end{eqnarray*}
which provides that the sequence of processes $(Y^{n},Z^{n})$ is
Cauchy in the Banach space $S^{2}([0,T];\R)\times
M^{2}(0,T;\R^{d})$. Consequently, there exists a couple $(Y,Z)\in
S^{2}([0,T];\R)\times M^{2}(0,T;\R^{d})$ such that
\begin{eqnarray*}
\E\left\{\sup_{0\leq s\leq T}\left| Y_{s}^{n}-Y_{s}^{{}}\right|
^{2}+\int_{t}^{T}\left| Y_{s}^{n}-Y_{s}\right|
^{2}dA_s+\int_{t}^{T}\left\|Z^{n}_{s}-Z_{s}\right\|^{2}ds\right)
\rightarrow 0,\mbox{ as }n\rightarrow \infty .
\end{eqnarray*}
On the other hand, we rewrite (\ref{K:n}) as
\begin{equation}\label{K:n1}
K_t^n=Y_0^n-Y^n_t-\int_0^t f(s,Y_s^n,Z_s^n) ds -\int_0^t \phi(s,Y_s^n)
dA_s-\int_0^t g(s) \overleftarrow{dB}_s+\int_0^t Z^n_s \ d W_s.
\end{equation}
By the convergence of $Y^n,\ Z^n$ (for a subsequence), the fact that
$f,\phi$ are continuous and
\begin{itemize}
\item $\sup_{n\geq0}|f(s,Y^n_s,Z_s)|\leq f_s+K\left\{(\sup_{n\geq0}|Y_s^n|)+\|Z_s\|\right\}$,
\item $\sup_{n\geq0}|\phi(s,Y^n_s)|\leq \phi_s+K\left\{(\sup_{n\geq0}|Y^n_s|)\right\}$,
\item $\mathbb{E}\int_t^T|f(s,Y^n_s,Z^n_s)-f(s,Y^n_s,Z_s)|^2ds\leq C\mathbb{E}\int_t^T\|Z^n_s-Z_s\|^2ds$
\end{itemize}
we get the existence of a  process $K$ which verifies for all
$t\in[0,T]$
$$\E\left| K_{t}^{n}-K_{t}^{{}}\right| ^{2}\longrightarrow 0$$
and passing to the limit in \eqref{h3}, we have
$$Y_t=\xi+\int_t^Tf(s,Y_s,Z_s)ds+\int_t^T\phi(s,Y_s)dA_s+K_T-K_t+\int_t^T g(s) \overleftarrow{dB}_s-\int_t^TZ_s dW_s, 0\leq t\leq T,$$
$\mathbb{P}$-a.s.

It remains to show that $(Y,Z,K)$ solves the reflected BSDE $(\xi,
f,\phi,g,S)$. In fact, since $(Y^{n},K^{n})$ converges to $(Y,K)$ in
probability uniformly in $t$, $dK^n$ converges to $dK$ in
probability. This implies $\int_{0}^{T}(Y_{s}^{n}-S_{s})dK_{s}^{n}$ converges to $\int_{0}^{T}(Y_{s}-S_{s})dK_{s}$ in probability. We have $Y_{t}\geq
S_{t}$ a.s. for $\in[0,T]$ so that $\int_{0}^{T}(Y_{s}-S_{s})dK_{s}\geq 0$. On the other hand, $\int_{0}^{T}(Y_{s}^{n}-S_{s})dK_{s}^{n}=-n\int_{0}^{T}|
(Y_{s}^{n}-S_{s})^{-}| ^{2}ds \leq 0$. Hence $\int_{0}^{T}(Y_{s}-S_{s})dK_{s}=0$.

\noindent{\bf Uniqueness}\newline
Let us define
\begin{eqnarray*}
\left\{ \left( \Delta Y_{t},\Delta Z_{t},\Delta K_{t}\right) ,\mbox{
}0\leq t\leq T \right\} =\left\{ (Y_{t}-Y_{t}^{\prime
},Z_{t}-Z_{t}^{\prime },K_{t}-K_{t}^{\prime }),\mbox{ }0\leq t\leq T
\right\}
\end{eqnarray*}
where
 $\displaystyle{\left\{
\left( Y_{t},Z_{t},K_{t}\right) ,\mbox{ }0\leq t\leq T
\right\}}$ and $\displaystyle{\left\{ (Y_{t}^{\prime },Z_{t}^{\prime },K_{t}^{\prime }),%
\mbox{ }0\leq t\leq T\right\}}$ denote two solutions of the
reflected generalized BDSDE \eqref{h4}.

It follows again by It\^{o}'s formula that for every $0\leq t\leq T$
\begin{eqnarray*}
\left| \Delta Y_{t}\right| ^{2}+\int_{t}^{T}\|\Delta Z_{s}\| ^{2}ds &=& 2\int_{t}^{T}\Delta
Y_{s}(f(s,Y_{s},Z_{s}^{{}})-f(s,Y_{s}^{\prime },Z_{s}^{\prime
}))ds\nonumber\\
&&+2\int_{t}^{T}\Delta Y_{s}(\phi(s,Y_{s})-\phi(s,Y_{s}^{\prime
}))dA_s\nonumber\\
&&-2\int_{t}^{T}\langle \Delta Y_{s},\Delta Z_{s}dW_{s}\rangle
+2\int_{t}^{T}\Delta Y_{s}d(\Delta K_s).
\end{eqnarray*}
Since
\begin{eqnarray*}
\int_t^T\Delta Y_{s} d(\Delta K_s) \leq 0,
\end{eqnarray*}
and by using similar computation as in the proof of existence, we have
\begin{eqnarray*}
\E\left\{\left| \Delta Y_{t}\right| ^{2}+\int_{t}^{T}|\Delta
Y_{s}|dA_s+\int_{t}^{T}\| \Delta Z_{s}\| ^{2}ds \right\} &\leq
&C\E\int_{t}^{T}|\Delta Y_{s}|^2ds,
\end{eqnarray*}
from which, we deduce that $\displaystyle{\Delta Y_{t}=0}$ and
further $\displaystyle{\Delta Z_{t}=0}$. Moreover since
\begin{eqnarray*}
\Delta K_{t} &=&\Delta
Y_{0}-\Delta Y_{t}-\int_{0}^{t}\left(f(s,Y_{s},Z_{s})-f(s,Y_{s}^{\prime},Z_{s}^{\prime
})\right)ds
-\int_{0}^{t}\left(\phi(s,Y_{s})-\phi(s,Y_{s}^{\prime})\right)dA_s\\
&&+\int_{0}^{t}\Delta Z_{s}dW_{s},
\end{eqnarray*}
we have $\displaystyle{\Delta K_{t}=0}$. The proof is now complete.
\end{proof}

\begin{proof}[Proof of Theorem \ref{Th:exis-uniq}]
For any given $(\bar{Y},\bar{Z})\in S^{2}([0,T];\R)\times M^{2}(0,T;\R^{d})$, let consider the reflected generalized BDSDE:
\begin{equation}
Y_{t}=\xi+\int_{0}^t f(s,{Y}_s,{Z}_s)ds
+\int_{t}^{T}\phi(s,Y_s)dA_s+\int_{t}^{T}g(s,\bar{Y}_s,\bar{Z}_s)\,\overleftarrow{dB}_{s}
-\int_{t}^{T}Z_{s}dW_{s}+K_T-K_t.\label{h'4}
\end{equation}
It follows from Proposition \ref{propo} that reflected generalized BDSDE \eqref{h'4} has a unique solution $(Y,Z,K)$. Therefore, the mapping
$$
\begin{array}{lrlll}
\Phi:&S^{2}([0,T];\R)\times M^{2}(0,T;\R^{d})&\longrightarrow&S^{2}([0,T];\R)\times M^{2}(0,T;\R^{d})\\
&(\bar{Y},\bar{Z})&\longmapsto&(Y,Z)=\Phi(\bar{Y},\bar{Z}).
\end{array}
$$
is well defined.

Next, let $(Y,Z),\ (Y',Z'),\ (\bar{Y},\bar{Z})$ and
$(\bar{Y'},\bar{Z'})\in S^{2}([0,T];\R)\times M^{2}(0,T;\R^{d})$ such that
$(Y,Z)=\Phi(\bar{Y},\bar{Z})$ and $(Y',Z')=\Phi(\bar{Y'},\bar{Z'})$ and set $\Delta \eta=\eta-\eta'$ for $\eta=Y,\bar{Y},Z,\bar{Z}$. By
virtue of It\^o's formula, we have
\begin{eqnarray*}
&&\E e^{\mu t+\beta A_t}|\Delta Y_t|^2+\E\int_t^T e^{\mu s+\beta A_s}\|\Delta Z_s\|^2ds\\
&&=2\E\int_t^T e^{\mu s+\beta A_s}\Delta
Y_s\left\{f(s,{Y}_s,{Z}_s)-f(s,{Y'}_s,{Z'}_s)\right\}ds
+2\E\int_t^T e^{\mu s+\beta A_s} \Delta Y_s\left\{\phi(s,{Y}_s)-\phi(s,{Y'}_s)\right\}dA_s\\
 &&+2\E\int_t^T e^{\mu s+\beta A_s} \Delta Y_s d(\Delta K_s)+\int_t^Te^{\mu s+\beta A_s}\left\|g(s,\bar{Y}_s,\bar{Z}_s)-g(s,\bar{Y'}_s,\bar{Z'}_s)\right\|^2ds\\
 &&-\mu\E\int_t^T
e^{\mu s+\beta A_s} \left|\Delta Y_s\right|^2 ds-\beta\E\int_t^T
e^{\mu s+\beta A_s} \left|\Delta Y_s\right|^2 dA_s.
\end{eqnarray*}
From$({\bf H2})$ there exists $\alpha<\alpha'<1$ such that
\begin{eqnarray*}
\Delta Y_s\left\{f(s,{Y}_s,{Z}_s)-f(s,{Y'}_s,{Z'}_s)\right\}&\leq& \left[\frac{c}{1-\alpha'}+(1-\alpha')\right]|\Delta Y_s|^2+(1-\alpha')|\Delta Z_s|^2\\
\Delta Y_s\left\{\phi(s,{Y}_s)-\phi(s,{Y'}_s)\right\}&\leq &\beta |\Delta Y_s|^2,
\end{eqnarray*}
which together with $\displaystyle\E\int_t^T e^{\mu s+\beta A_s} \Delta Y_s
d(K_s-K'_s)\leq 0$, provide
\begin{eqnarray*}
&&\E e^{\mu t+\beta A_t}|\Delta Y_t|^2+\alpha'\E\int_t^T e^{\mu s+\beta A_s}\|\Delta Z_s\|^2ds\\
&&\leq \left(\frac{c}{1-\alpha'}+1-\alpha'-\mu\right)\E\int_t^T
e^{\mu s+\beta A_s}|\Delta Y_s|^2 ds+\beta\E\int_t^T e^{\mu s+\beta
A_s}|\Delta
Y_s|^2 dA_s\\
&&+c\E\int_t^Te^{\mu s+\beta A_s}|\Delta\bar{Y}_s|^{2}ds+\alpha\E\int_t^Te^{\mu s+\beta A_s}\|\Delta\bar{Z}_s\|^2ds
\end{eqnarray*}
Next, choosing \ $\mu$\ such that\ $\displaystyle
\mu-\frac{c}{1-\alpha'}-1+\alpha'=\frac{\alpha'c}{\alpha}$, we obtain
\begin{eqnarray*}
&&\alpha'\left[\frac{c}{\alpha}\E\int_t^T
e^{\mu s+\beta A_s} \left|\Delta Y_s\right|^2 ds+\frac{|\beta|}{\alpha'}\E\int_t^T e^{\mu s+\beta A_s}|\Delta
Y_s|^2 dA_s+\E\int_t^T e^{\mu s+\beta A_s}\|\Delta Z_s\|^2ds\right]\\
&&\leq \alpha\left(\frac{c}{\alpha}\E\int_t^T e^{\mu s+\beta
A_s}\left|\Delta \bar {Y}_s\right|^2ds +\frac{|\beta|}{\alpha'}\E\int_t^T e^{\mu
s+\beta A_s}\left|\Delta \bar {Y}_s\right|^2dA_s+\E\int_t^Te^{\mu
s+\beta A_s}\left\|\Delta\bar{Z}_s)\right\|^2ds\right).
\end{eqnarray*}
Therefore
\begin{eqnarray*}
&&\bar{c}\E\int_t^T
e^{\mu s+\beta A_s} \left|\Delta Y_s\right|^2 ds+\bar{\beta}\E\int_t^T e^{\mu s+\beta A_s}|\Delta
Y_s|^2 dA_s+\E\int_t^T e^{\mu s+\beta A_s}\|\Delta Z_s\|^2ds\\
&&\leq \frac{\alpha}{\alpha'}\left(\bar{c}\E\int_t^T e^{\mu s+\beta
A_s}\left|\Delta \bar {Y}_s\right|^2ds +\bar{\beta}\E\int_t^T e^{\mu
s+\beta A_s}\left|\Delta \bar {Y}_s\right|^2dA_s+\E\int_t^Te^{\mu
s+\beta A_s}\left\|\Delta\bar{Z}_s)\right\|^2ds\right)
\end{eqnarray*}
where  $\displaystyle \bar{c}=c/\alpha$ and $\displaystyle \bar{\beta}=|\beta|/\alpha'$.

Since $\displaystyle\frac{\alpha}{\alpha'}<1$, then $\Phi$ is a strict contraction on
$\mathcal{S}^{2}([0,T],\R)\times \mathcal{M}^{2}((0,T);\R^d)$
equipped with the norm
\begin{eqnarray*}
\|(Y,Z)\|^{2}=\bar{c}\E\int_t^T
e^{\mu s+\beta A_s} \left|Y_s\right|^2 ds+\bar{\beta}\E\int_t^T e^{\mu s+\beta A_s}|
Y_s|^2 dA_s+\E\int_t^T e^{\mu s+\beta A_s}\|Z_s\|^2ds.
\end{eqnarray*}
Then it has a unique fixed point, which is the unique solution  of BDSDE \eqref{a1}.
\end{proof}

\section {Connection to stochastic viscosity solution for reflected SPDEs with nonlinear
Neumann boundary condition} \setcounter{theorem}{0}
\setcounter{equation}{0} In this section we will investigate the
reflected generalized BDSDEs studied in the previous
 section in order to give a interpretation for the stochastic viscosity
 solution of a class of nonlinear reflected SPDEs with nonlinear Neumann boundary condition.

\subsection {Notion of stochastic viscosity solution for reflected SPDEs with nonlinear Neumann boundary condition}
With the same notations as in Section 2, let ${\bf
F}^{B}=\{\mathcal{F}_{t,T}^{B}\}_{0\leq t\leq T}$ be the filtration
generated by $B$. The set ${\mathcal{M}}^{B}_{0,T}$ denote all the ${\bf F}^{B}$-stopping
times $\tau$ such $0\leq \tau\leq T$, a.s. For generic
Euclidean spaces $E$ and $E_{1}$ we introduce the following:
\begin{enumerate}
\item The symbol $\mathcal{C}^{k,n}([0,T]\times
E; E_{1})$ stands for the space of all $E_{1}$-valued functions
defined on $[0,T]\times E$ which are $k$-times continuously
differentiable in $t$ and $n$-times continuously differentiable in
$x$, and $\mathcal{C}^{k,n}_{b}([0,T]\times E; E_{1})$ denotes the
subspace of $\mathcal{C}^{k,n}([0,T]\times E; E_{1})$ in which all
functions have uniformly bounded partial derivatives.
\item For any sub-$\sigma$-field $\mathcal{G} \subseteq
\mathcal{F}_{T}^{B}$, $\mathcal{C}^{k,n}(\mathcal{G},[0,T]\times E;
E_{1})$ (resp.\, $\mathcal{C}^{k,n}_{b}(\mathcal{G},[0,T]\times E;
E_{1})$) denotes the space of all $\mathcal{C}^{k,n}([0,T]\times E;
E_{1})$  (resp.\, $\mathcal{C}^{k,n}_{b}([0,T]\times E;E_{1})$-valued
random variable that are $\mathcal{G}\otimes\mathcal{B}([0,T]\times
E)$-measurable;
\item $\mathcal{C}^{k,n}({\bf F}^{B},[0,T]\times E; E_{1})$
(resp.$\mathcal{C}^{k,n}_{b}({\bf F}^{B},[0,T]\times E; E_{1})$) is
the space of all random fields $\phi\in
\mathcal{C}^{k,n}({\mathcal{F}}_{T},[0,T]\times E; E_{1}$ (resp.
$\mathcal{C}^{k,n}({\mathcal{F}}_{T},[0,T]\times E; E_{1})$, such
that for fixed $x\in E$ and $t\in [0,T]$, the mapping
$\displaystyle{\omega\mapsto \alpha(t,\omega,x)}$ is
${\bf F}^{B}$-progressively measurable.
\item For any sub-$\sigma$-field $\mathcal{G} \subseteq
\mathcal{F}^{B}$ and a real number $ p\geq 0$,
$L^{p}(\mathcal{G};E)$ to be all $E$-valued $\mathcal{G}$-measurable
random variable $\xi$ such that $ \E|\xi|^{p}<\infty$.
\end{enumerate}
Furthermore, regardless their dimensions we denote by
$<.,.>$ and $|.|$ the inner product and norm in $E$ and
$E_1$, respectively. For $(t,x,y)\in[0,T]\times\R^{d}\times\R$, we
denote $D_{x}=(\frac{\partial}{\partial
x_{1}},....,\frac{\partial}{\partial x_{d}}),\,\\
D_{xx}=(\partial^{2}_{x_{i}x_{j}})_{i,j=1}^{d}$,
$D_{y}=\frac{\partial}{\partial y}, \,\
D_{t}=\frac{\partial}{\partial t}$. The meaning of $D_{xy}$ and
$D_{yy}$ is then self-explanatory.

Let $\Theta$ be an open connected and smooth bounded domain of $\R^{d}\, (d\geq
1)$ such that
for a function $\psi\in\mathcal{C}^{2}_b(\R^{d}),\ \Theta$ and its
boundary  $\partial\Theta$ are characterized by
$\Theta=\{\psi>0\},\, \partial\Theta=\{\psi=0\}$ and, for any
$x\in\partial\Theta,\, \nabla\psi(x)$ is the unit normal vector
pointing towards the interior of $\Theta$.

In this section, we shall make use of the following standing assumptions:
\begin{description}
\item $({\bf A1})$ The functions $\sigma:\R^{d}\rightarrow\R^{d\times
d}$ and $b:\R^{d}\rightarrow\R^{d}$ are uniformly Lipschitz continuous,
with a common Lipschitz constant $K>0$.
\item $({\bf A2})$ The functions $f:\Omega\times[0,T]\times\overline{\Theta}\times\R\times\R^{d}\rightarrow\R$ and $\phi:\Omega\times[0,T]\times\overline{\Theta}\times\R\rightarrow\R$ are continuous random field such
that for fixed $(x,y,z)$ and $f(\cdot,x,y,z),\;\phi(\cdot,x,y)$ and $h(\cdot,x)$ are ${\bf F}^{B}$-progressively measurable; and there exists $K>0$, such that for $\P$-a.e $\omega$,
\begin{description}
\item $(i)\;|f(\omega,t,x,y,z)|\leq K(1+|x|+|y|+\|z\|)$,
\item $(ii)\;|f(\omega,t,x,y,z)-f(\omega,t',x',y',z')|\leq c(|t-t'|+|x-x'|+|y-y'|+\|z-z'\|)$,
\item $(iii)\;|\phi(\omega,t,x,y)|\leq K(1+|x|+|y|)$,
\item $(iv)\;\langle y-y',\phi(\omega,t,x,y)-\phi(\omega,t,x,y')\rangle\leq \beta|y-y'|^2$
\item $(v)\;|\phi(\omega,t,x,y)-\phi(\omega,t,x,y')|\leq K(|x-x'|+|y-y'|)$
\end{description}
\item $({\bf A3})$ The function $l:\R^n\rightarrow\R$ is continuous, such that for some constants $K,\;p>0$
\begin{description}
\item $|l(x)|\leq K(1 + |x|^p),\; x\in\R^n$.
\end{description}
\item $({\bf A4})$ The function $h:\Omega\times[0,T]\times\overline{\Theta}\rightarrow\R$ is continuous random field such that for fixed $x,\; h(\cdot,x)$ is ${\bf F}^{B}$-progressively measurable; and there exists $K>0$, such that for $\P$-a.e $\omega$,
\begin{description}
\item $(i)\;|h(\omega,t,x)|\leq K(1+|x|)$,
\item $(ii)\; h(\omega,T,x)\leq l(x)$.
\end{description}
\item $({\bf A5})$ The function $g\in{\mathcal{C}}_{b}^{0,2,3}([0,T]\times\overline{\Theta}\times\R;\R)$.
\end{description}
Let us consider the related obstacle problem for SPDE with nonlinear
Neumann boundary condition:
\begin{eqnarray*}
\mathcal{OP}^{(f,\phi,g,h,l)}\left\{
\begin{array}{l}
(i)\,\displaystyle \min\left\{u(t,x)-h(t,x),\; -\frac{\partial
u(t,x)}{\partial t}-[
Lu(t,x)+f(t,x,u(t,x),\sigma^{*}(x)D_{x}u(t,x))]\right.\\\\
\left.\,\,\,\,\,\,\,\,\,\,\,\,\,\,\,\,\,\ -
g(t,x,u(t,x))\dot{B}_{s}\right\}=0,\,\,\
(t,x)\in[0,T]\times\Theta\\\\
(ii)\,\displaystyle\frac{\partial u}{\partial
n}(t,x)+\phi(t,x,u(t,x))=0,\,\,\ (t,x)\in[0,T]\times\partial\Theta, \\\\
(iii)\,u(T,x)=l(x),\,\,\,\,\,\,\ x\in\overline{\Theta}
\end{array}\right.
\end{eqnarray*}
where
\begin{eqnarray*}
L=\frac{1}{2}\sum_{i,j=1}^{d}(\sigma(x)\sigma^{*}(x))_{i,j}
\frac{\partial^{2}}{\partial x_{i}\partial
x_{j}}+\sum^{d}_{i=1}b_{i}(x)\frac{\partial}{\partial x_{i}},\quad
\forall\, x\in\Theta,
\end{eqnarray*}
and
\begin{eqnarray*}
\frac{\partial}{\partial n}=\sum_{i=1}^{d}\frac{\partial
\psi}{\partial x_{i}}(x)\frac{\partial}{\partial x_{i}},\quad
\forall\, x\in\partial\Theta.
\end{eqnarray*}

\begin{remark}
In the previous definition, the stochastic ``variational inequality" is defined formally since it involves a quantitative comparison between a random field and its stochastic differential. Therefore it does actually make sense as follows (see \cite{ML}):
there exists a regular random measure $\nu$ such that $(i)$ becomes
\begin{eqnarray*}
\left\{
\begin{array}{l}
(iv)\,u(t,x)\geq h(t,x),\;\;\;d\P\otimes dt\otimes dx-a.e.,\\\\
(v)\,-\frac{\partial u}{\partial t}(t,x)-[Lu(t,x)+f(t,x,u(t,x),\sigma^{*}(x)\nabla u(t,x))]-g(t,x,u(t,x))\dot{B}_{s}=-\nu(dt,dx),\\\\ a.s.,(t,x)\in[0,T]\times\Theta,\\\\
(vi)\,\nu(u>h)=0,\,\,a.s.
\end{array}\right.
\end{eqnarray*}
\end{remark}
Our next goal is to define the notion of stochastic viscosity solution to
$\mathcal{OP}^{(f,\phi,g,h,l)}$. In fact, we recall some of
the notations appear in \cite{BM1}. Let $\eta\in\mathcal{C}({\bf
F}^{B},[0,T]\times\R^{d}\times\R)$ be the solution of equation
\begin{eqnarray*}
\eta(t,x,y)&=&y+\int_t^T\langle g(s,x,\eta(s,x,y)),
\circ{\overleftarrow{dB}}_s\rangle,
\end{eqnarray*}
where $\circ{\overleftarrow{dB}}$ is the Stratonowich backward stochastic integral which respect the Brownian $B$. We have equivalence with Itô backward stochastic integral which respect the Brownian $B$ as follows:
\begin{eqnarray*}
\int_t^T\langle g(s,x,\eta(s,x,y)),
\circ{\overleftarrow{dB}}_s\rangle&=&\frac{1}{2}\int_t^T\langle g,D_yg\rangle(s,x,\eta(s,x,y)ds
+\int_t^T\langle g(s,x,\eta(s,x,y)),\overleftarrow{dB}_s\rangle.\label{c'1}
\end{eqnarray*}
By $({\bf A5})$ and for all $(t,x)\in [0,T]\times\R^d$, the mapping $y\mapsto\eta(s,x,y)$ defines a diffeomorphism
almost surely. Hence if we denote by $\varepsilon(s,x,y)$ its
$y$-inverse, one can show that (cf. Buckdahn and Ma \cite{BM1})
\begin{eqnarray}
\varepsilon(t,x,y)=y-\int_t^T\langle D_{y}\varepsilon(s,x,y)
g(s,x,y),\circ {\overleftarrow{dB}}_{s}\rangle. \label{c2}
\end{eqnarray}
To simplify the notation in the sequel we denote
\begin{eqnarray*}
&&A_{f,g}(\varphi(t,x))=-
L\varphi(t,x)-f(t,x,\varphi(t,x),\sigma^{*}D_{x}\varphi(t,x))+\frac{1}{2}
(g,D_{y}g) (t,x,\varphi(t,x))\\
&&\mbox{and}\, \Psi(t,x)=\eta(t,x,\varphi(t,x)).
\end{eqnarray*}
\begin{definition}\label{D:defvisco}
A random field $u \in \mathcal{C}\left(\mathbf{F}^B, [0,T]\times
\overline{\Theta}\right)$ is called a stochastic viscosity
subsolution of the stochastic obstacle problem
$\mathcal{OP}^{(f,\phi,g,h,l)}$ if $u\left(T,x\right)\leq
l\left(x\right)$, for all $x\in \overline{\Theta}$, and if for any
stopping time $\tau \in \mathcal{M}_{0,T}^B$, any state variable
$\xi\in L^0\left(\mathcal{F}_{\tau}^B, \overline{\Theta}\right)$, and any
random field $\varphi\in
\mathcal{C}^{1,2}\left(\mathcal{F}_{\tau}^B,\ [0,T]\times
\mathbb{R}^d\right)$ such that for $\mathbb{P}$-almost
all $\omega\in\left\{0<\tau<T\right\}$,
$$u\left(t,\omega,x\right)-\Psi\left(t,\omega,x\right)
 \leq 0=u\left(\tau(\omega),\xi(\omega)\right)-\Psi
\left(\tau(\omega),\xi(\omega)\right)$$ for all
$\left(t,x\right)$ in some neighborhood
$\mathcal{V}\left(\omega,\tau\left(\omega\right),\xi\left(\omega\right)\right)$
of $\left(\tau\left(\omega\right),\xi\left(\omega\right)\right)$,
it holds:
\begin{itemize}
\item[(a)] on the event $\left\{0<\tau<T\right\}\cap\left\{\xi\in
\Theta\right\}$
\begin{eqnarray}\label{def1}
\min\left\{u(\tau,\xi)-h(\tau,\xi),\mathrm{A}_{f,
g}\left(\Psi\left(\tau,\xi\right)\right)- D_y\Psi
\left(\tau,\xi\right)D_t \varphi\left(\tau,\xi\right)\right\}\leq 0
\end{eqnarray}
holds, $\mathbb{P}$-almost surely;
\item[(b)] on the event $\left\{0<\tau<T\right\}\cap\left\{\xi\in
\partial \Theta\right\}$ the inequality
\begin{eqnarray} \min & \left[\min\left\{u(\tau,\xi)-h(\tau,\xi),\mathrm{A}_{f,
g}\left(\Psi\left(\tau,\xi\right)\right)- D_y \Psi
\left(\tau,\xi\right) D_t
\varphi\left(\tau,\xi\right)\right\}\,,\right.\nonumber\\
& \left.-\frac{\displaystyle{\partial \Psi}}{\displaystyle{\partial
n}}\left(\tau,\xi\right)-\phi\left(\tau,\xi,\Psi\left(\tau,\xi\right)\right)
\right] \leq 0 \label{viscosity01}
\end{eqnarray}
holds, $\mathbb{P}$-almost surely.
\end{itemize}
A random field $u \in \mathcal{C}\left(\mathbf{F}^B, [0,T]\times
\overline{\Theta}\right)$ is called a stochastic viscosity
supersolution of the stochastic obstacle problem
$\mathcal{OP}^{(f,\phi,g,h,l)}$ if $u\left(T,x\right)\geq
l\left(x\right)$, for all $x\in \overline{\Theta}$, and if for any
stopping time $\tau\in\mathcal{M}_{0,T}^B$, any state variable
$\xi\in L^0\left(\mathcal{F}_{\tau}^B,\overline{\Theta}\right)$, and any random
field $\varphi\in \mathcal{C}^{1,2}\left(\mathcal{F}_{\tau}^B,\
[0,T]\times \mathbb{R}^d\right)$ such that for
$\mathbb{P}$-almost all $\omega\in\left\{0<\tau<T\right\}$
$$u\left(t,\omega,x\right)-\Psi \left(t,\omega,x\right)
 \geq 0=u\left(\tau(\omega),\xi(\omega)\right)-\Psi
\left(\tau(\omega),\xi(\omega)\right)$$ for all
$\left(t,x\right)$ in some neighborhood
$\mathcal{V}\left(\omega,\tau\left(\omega\right),\xi\left(\omega\right)\right)$
of $\left(\tau\left(\omega\right),\xi\left(\omega\right)\right)$, it holds:
\begin{itemize}
\item[(a)] on the event $\left\{0<\tau<T\right\}\cap\left\{\xi\in
\Theta\right\}$
\begin{equation}\label{E:def12}
\min\left\{u(\tau,\xi)-h(\tau,\xi),\mathrm{A}_{f,
g}\left(\Psi\left(\tau,\xi\right)\right) - D_y\Psi
\left(\tau,\xi\right)D_t \varphi\left(\tau,\xi\right)\right\}\geq 0
\end{equation}
holds, $\mathbb{P}$-almost surely;
\item[(b)] on the event $\left\{0<\tau<T\right\}\cap\left\{\xi\in
\partial \Theta\right\}$
\begin{align} \max & \left[\min\left\{u(\tau,\xi)-h(\tau,\xi),\mathrm{A}_{f,
g}\left(\Psi\left(\tau,\xi\right)\right)- D_y \Psi
\left(\tau,\xi\right) D_t
\varphi\left(\tau,\xi\right)\right\}\,,\right.\nonumber\\
& \left.-\frac{\displaystyle{\partial \Psi}}{\displaystyle{\partial
n}}\left(\tau,\xi\right)-\phi\left(\tau,\xi,\Psi\left(\tau,\xi\right)\right)
\right] \geq 0 \label{E:viscosity02}
\end{align}
 holds, $\mathbb{P}$-almost surely.
 \end{itemize}

Finally, a random field $u \in \mathcal{C}\left(\mathbf{F}^B,
[0,T]\times \overline{\Theta}\right)$ is called a stochastic
viscosity solution of the stochastic obstacle problem
$\mathcal{OP}^{(f,\phi,g,h,l)}$ if it is both a stochastic
viscosity subsolution and a stochastic viscosity supersolution.
\end{definition}
\begin{remark}
Observe that if $f,\,\phi$ are deterministic and $g\equiv 0$, the flow $\eta$ becomes $\eta(t,x,y)=y$ and $\Psi(t,x)=\varphi(t,x),\, \forall\ (t,x,y)\in[0,T]\times\R^d\times\R$. Thus,
definition\, $\ref{D:defvisco}$ coincides with the definition of
(deterministic) viscosity solution of PDE
$\mathcal{OP}^{(f,\phi,0,h,l)}$ given in \cite{Ral} for each fixed $\omega\in\{0<\tau<T\}$, modulo the
${\bf F}^B$-measurability requirement of the test function $\varphi$.
\end{remark}
Now let us recall a notion of random viscosity solution which will be a bridge linking the
stochastic viscosity solution and its deterministic counterpart.
\begin{definition}
A random field $u\in C({\bf F}^B, [0,T]\times\R^n)$ is called an $\omega$-wise viscosity solution if
for $\P$-almost all $\omega\in \Omega,\;  u(\omega,\cdot,\cdot)$ is a (deterministic) viscosity solution of $\mathcal{OP}^{(f,\phi,0,h,l)}$.
\end{definition}

Next we introduce the Doss-Sussman transformation. It enables us to convert an reflected SPDE of
the form $\mathcal{OP}^{(f,\phi,g,h,l)}$ to an classical partial differential equation of the form $\mathcal{OP}^{(\widetilde{f},0,\widetilde{\phi}, \widetilde{h})}$ where $\widetilde{f}$, $\widetilde{\phi}$ and $\widetilde{h}$ are certain well-defined random fields, which are defined in terms of $f,\,\phi$ and $h$.
\begin{proposition}
Assume $({\bf A1})$-$({\bf A5})$. A random field $u$ is a stochastic viscosity solution to the
$\mathcal{OP}^{(f,\phi,g,h,l)}$ if and only if $v(\cdot, \cdot) = \varepsilon(\cdot, \cdot,u(\cdot, \cdot))$ is a stochastic viscosity solution to the SPDE
$\mathcal{OP}^{(\widetilde{f},0,\widetilde{\phi}, \widetilde{h},l)}$, where $(\widetilde{f},\widetilde{\phi}, \widetilde{h})$ are three coefficients that will be made precise later (see \eqref{Doss1},\eqref{Doss2} and \eqref{Doss3}).
\end{proposition}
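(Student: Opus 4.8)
The plan is to exploit that, under $({\bf A5})$, for each fixed $(t,x)$ the map $y\mapsto\eta(t,x,y)$ is a strictly increasing $\mathcal{C}^{2}$ diffeomorphism of $\R$ whose inverse is $y\mapsto\varepsilon(t,x,y)$. Differentiating the defining (Stratonovich) equation of $\eta$ in $y$ shows that $D_y\eta$ solves a linear backward SDE, hence $D_y\eta(t,x,y)>0$ for all $(t,x,y)$, $\P$-a.s.; consequently $u\longmapsto v:=\varepsilon(\cdot,\cdot,u)$ is a bijection of $\mathcal{C}({\bf F}^{B},[0,T]\times\overline{\Theta})$ onto itself with inverse $v\longmapsto u=\eta(\cdot,\cdot,v)$, and since the defining equations give $\eta(T,x,y)=\varepsilon(T,x,y)=y$, we have $u(T,\cdot)\le l\iff v(T,\cdot)\le l$ (and similarly with $\ge$), so the terminal data correspond automatically.

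Next I would make precise the three transformed coefficients, which are the objects \eqref{Doss1}--\eqref{Doss3} referred to in the statement: abbreviating $\eta=\eta(t,x,y)$ and evaluating every $\eta$-quantity at $(t,x,y)$,
\[
\widetilde f(t,x,y,z)=\frac{1}{D_y\eta}\Big[\tfrac12\,\mathrm{tr}(\sigma\sigma^{*}D_{xx}\eta)+\langle b,D_x\eta\rangle+\langle\sigma^{*}D_{xy}\eta,z\rangle+\tfrac12 D_{yy}\eta\,|z|^{2}+f(t,x,\eta,\sigma^{*}D_x\eta+D_y\eta\,z)-\tfrac12(g,D_yg)(t,x,\eta)\Big],
\]
\[
\widetilde\phi(t,x,y)=\frac{1}{D_y\eta}\big[\langle D_x\eta,\nabla\psi(x)\rangle+\phi(t,x,\eta)\big],\qquad \widetilde h(t,x)=\varepsilon(t,x,h(t,x)).
\]
The $\mathcal{C}^{0,2,2}$-regularity of $\eta$ afforded by $({\bf A5})$, together with the uniform bounds on $D_x\eta,D_{xx}\eta,D_{xy}\eta,D_y\eta,D_{yy}\eta$ and on $1/D_y\eta$ coming from standard flow estimates, and with $({\bf A1})$--$({\bf A4})$, shows that $\widetilde f,\widetilde\phi$ are continuous random fields, ${\bf F}^{B}$-progressively measurable at frozen $(x,y,z)$ resp.\ $(x,y)$, that $\widetilde h$ is continuous with $\widetilde h(T,\cdot)=h(T,\cdot)\le l$, and that they satisfy the analogues of $({\bf A1})$--$({\bf A4})$. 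In particular, since the noise coefficient of $\mathcal{OP}^{(\widetilde f,0,\widetilde\phi,\widetilde h,l)}$ is $0$, the flow attached to this problem is the identity map, so in Definition \ref{D:defvisco} one has $\Psi=\varphi$ and the test field is $\varphi$ itself.

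The heart of the argument is a pointwise (pathwise, for frozen $\omega$) computation. Fix $\tau\in\mathcal{M}_{0,T}^{B}$, $\xi\in L^{0}(\mathcal{F}_{\tau}^{B},\overline{\Theta})$ and a test field $\varphi\in\mathcal{C}^{1,2}(\mathcal{F}_{\tau}^{B},[0,T]\times\R^{d})$, and put $\Psi=\eta(\cdot,\cdot,\varphi)$. Expanding $D_x\Psi=D_x\eta+D_y\eta\,D_x\varphi$ and $D_{xx}\Psi$ by the second-order chain rule (no $t$-derivative of $\eta$ is ever needed, so the absence of $t$-smoothness of $\eta$ is harmless), substituting into $L\Psi$, and regrouping exactly along the definition of $\widetilde f$, one obtains at every $(t,x)$
\[
\mathrm{A}_{f,g}(\Psi(t,x))-D_y\Psi(t,x)D_t\varphi(t,x)=D_y\eta(t,x,\varphi(t,x))\big[-L\varphi(t,x)-\widetilde f(t,x,\varphi(t,x),\sigma^{*}(x)D_x\varphi(t,x))-D_t\varphi(t,x)\big],
\]
and, for $x\in\partial\Theta$,
\[
-\tfrac{\partial\Psi}{\partial n}(t,x)-\phi(t,x,\Psi(t,x))=D_y\eta(t,x,\varphi(t,x))\big[-\tfrac{\partial\varphi}{\partial n}(t,x)-\widetilde\phi(t,x,\varphi(t,x))\big];
\]
the bracketed right-hand sides are precisely $\mathrm{A}_{\widetilde f,0}(\varphi)-D_t\varphi$ and $-\tfrac{\partial\varphi}{\partial n}-\widetilde\phi(\cdot,\cdot,\varphi)$, i.e.\ the quantities entering the viscosity inequalities for $\mathcal{OP}^{(\widetilde f,0,\widetilde\phi,\widetilde h,l)}$. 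From here the equivalence follows clause by clause: if $v-\varphi$ attains a local maximum $0$ at $(\tau,\xi)$ then, $\varepsilon(t,x,\cdot)$ being strictly increasing, so does $u-\Psi$, and conversely; at $(\tau,\xi)$ one has $v(\tau,\xi)=\varphi(\tau,\xi)$ and $u(\tau,\xi)=\Psi(\tau,\xi)=\eta(\tau,\xi,\varphi(\tau,\xi))$, whence $u(\tau,\xi)-h(\tau,\xi)\le0\iff v(\tau,\xi)-\widetilde h(\tau,\xi)\le0$ (and with strict inequality) by monotonicity of $\eta$; and multiplication by $D_y\eta>0$ preserves signs. Consequently the inner $\min$ in \eqref{def1} and the outer $\min$ in \eqref{viscosity01} are $\le0$ for $(u,\Psi)$ exactly when the corresponding expressions built from $(v,\varphi)$ are $\le0$, so $u$ satisfies clauses (a)--(b) of the subsolution definition at $(\tau,\xi,\varphi)$ iff $v$ does for the transformed problem; the supersolution case is identical after reversing all inequalities and replacing the boundary $\min$ by $\max$, via \eqref{E:def12} and \eqref{E:viscosity02}. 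Ranging over all admissible $(\tau,\xi,\varphi)$ and combining with the correspondence of terminal data gives both implications. I expect the only genuinely laborious point to be the bookkeeping in the two displayed chain-rule identities, together with the ancillary verification that $\widetilde f,\widetilde\phi,\widetilde h$ inherit $({\bf A1})$--$({\bf A4})$ from the bounds on the derivatives of $\eta$; the rest is a monotonicity-and-sign argument.
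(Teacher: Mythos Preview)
Your proposal is correct and follows essentially the same route as the paper: transfer test fields via $\Psi=\eta(\cdot,\cdot,\varphi)$, use the strict monotonicity of $\eta(t,x,\cdot)$ and $\varepsilon(t,x,\cdot)$ to match the touching conditions and the obstacle comparison $\{u>h\}=\{v>\widetilde h\}$, and then invoke the two chain-rule identities to see that the viscosity quantities for the two problems differ only by the positive factor $D_y\eta$. The paper treats only one direction and the subsolution case explicitly and defers the chain-rule computation to \cite{Bal}, whereas you write out the identities and the sign-preservation argument in full; apart from this difference in presentation, the arguments coincide.
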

\begin{proof}
We shall only prove that if $u\in\mathcal{C}(\mathbf{F}^{B},[0,T]\times\overline{\Theta})$ is a stochastic viscosity
sub-(resp. super-)solution to SPDE $\mathcal{OP}^{(f,\phi,g,h,l)}$, then $v(\cdot, \cdot) = \varepsilon(\cdot, \cdot,u(\cdot, \cdot))$
belongs to $\mathcal{C}(\mathbf{F}^{B},[0,T]\times\overline{\Theta})$, and it is a stochastic viscosity sub-(resp. super-) solution to SPDE $\mathcal{OP}^{(\widetilde{f},\widetilde{\phi},0,\widetilde{h},l)}$. The converse part of the proposition can be proved in a very similar way. We shall only discuss for the stochastic subsolution case, as the supersolution part can be proved similarly. Therefore, let us assume that $u\in\mathcal{C}(\mathbf{F}^{B},[0,T]\times\overline{\Theta})$ is the is a
stochastic viscosity subsolution of the SPDE $\mathcal{OP}^{(f,\phi,g,h,l)}$. It then follows that $v(\cdot, \cdot) = \varepsilon(\cdot, \cdot,u(\cdot, \cdot))$
belongs to $\mathcal{C}(\mathbf{F}^{B},[0,T]\times\overline{\Theta})$. Let now show that $v$ is a stochastic viscosity subsolution of the SPDE $\mathcal{OP}^{(\widetilde{f},0,\widetilde{\phi}, \widetilde{h},l)}$. Firstly, since $y\mapsto\varepsilon(\cdot,\cdot,y)$ is increasing, $v(t,x)\geq \widetilde{h}(t,x), \forall\, (t,x)\in [0,T]\times\overline{\Theta}$, where
\begin{eqnarray}
\widetilde{h}(t,x)=\varepsilon(t,x,h(t,x)).\label{Doss3}
\end{eqnarray}
Following, there exist $(\tau,\xi)\in\mathcal{M}^{B}_{0,T}\times L^{0}({\mathcal{F}}^{B}_{\tau};\overline{\Theta})$ satisfy
\begin{eqnarray}
\P(v(\tau,\xi)>\widetilde{h}(\tau,\xi),\, 0<\tau<T)>0;\label{visco1}
\end{eqnarray}
and $\varphi\in\mathcal{C}^{1,2}\left(\mathcal{F}_{\tau}^B,\ [0,T]\times\overline{\Theta}\right)$ such that for $\P$-almost all $\displaystyle{\omega\in\{0<\tau<T,v(\tau,\xi)>\widetilde{h}(\tau,\xi)\}}$, the inequality
\begin{eqnarray}
u(\omega,t,x)-\Psi(\omega,t,x)\leq 0=u(\omega,\tau(\omega),\xi(\omega))-\Psi(\omega,\tau(\omega),\xi(\omega))\label{V}
\end{eqnarray}
holds for all $(t, x)$ in some neighborhood $\mathcal{V}(\omega,\tau(\omega),\xi(\tau))$ of $(\omega,\tau(\omega))$. Next putting
$\Psi(t,x) = \eta(t,x,\varphi(t,x))$ and since the mapping $y\mapsto\eta(t, x, y)$ is strictly increasing, for all $(t, x)\in\mathcal{V}(\tau,\xi)$ we get that
\begin{eqnarray*}
u(t, x)-\Psi(t,x) &=& \eta(t,x,v(t,x))-\eta(t,x,\varphi(t,x))\\
&\leq &0 = \eta(\tau,v(\tau,\xi))-\eta(\tau,\varphi(\tau\xi))\\
&=& u(\tau, \xi)-\Psi(\tau, \xi)
\end{eqnarray*}
holds $\P$-almost surely on $\displaystyle{\omega\in\{0<\tau<T,v(\tau,\xi)>\widetilde{h}(\tau,\xi)\}}$.
According to \eqref{visco1} and recall again the strictly increasing of the mapping $y\mapsto\eta(t, x, y)$, we have\newline $\displaystyle{\{v(\tau,\xi)>\widetilde{h}(\tau,\xi)\}=\{u(\tau,\xi)>h(\tau, \xi)\}}$.
Moreover, since $u$ is a stochastic viscosity subsolution of the SPDE $\mathcal{OP}^{(f,\phi,g,h,l)}$, we obtain:
\begin{itemize}
\item[(a)] on the event $\left\{0<\tau<T\right\}\cap\{u(\tau,\xi)>h(\tau, \xi)\}\cap\left\{\xi\in
\Theta\right\}$
\begin{eqnarray*}\label{E:def1}
\mathrm{A}_{f,g}\left(\Psi\left(\tau,\xi\right)\right)- D_y\Psi
\left(\tau,\xi\right)D_t \varphi\left(\tau,\xi\right)\leq 0
\end{eqnarray*}
holds, $\mathbb{P}$-almost surely;
\item[(b)] on the event $\left\{0<\tau<T\right\}\cap\{u(\tau,\xi)>h(\tau, \xi)\}\cap\left\{\xi\in
\partial \Theta\right\}$ the inequality
\begin{eqnarray*} \min\left[\mathrm{A}_{f,
g}\left(\Psi\left(\tau,\xi\right)\right)- D_y \Psi
\left(\tau,\xi\right) D_t
\varphi\left(\tau,\xi\right),-\frac{\displaystyle{\partial \Psi}}{\displaystyle{\partial
n}}\left(\tau,\xi\right)-\phi\left(\tau,\xi,\Psi\left(\tau,\xi\right)\right)
\right] \leq 0 \label{E:viscosity01}
\end{eqnarray*}
holds, $\mathbb{P}$-almost surely.
\end{itemize}
By the similarly calculation used in \cite{Bal}, we have:
\begin{itemize}
\item[(a)] on the event $\left\{0<\tau<T\right\}\cap\{v(\tau,\xi)>\widetilde{h}(\tau, \xi)\}\cap\left\{\xi\in
\Theta\right\}$ the inequality
\begin{eqnarray}\label{E:def11}
\mathrm{A}_{\widetilde{f},0}\left(\varphi\left(\tau,\xi\right)\right)-D_t \varphi\left(\tau,\xi\right)\leq 0
\end{eqnarray}
holds, $\mathbb{P}$-almost surely;
\item[(b)] on the event $\left\{0<\tau<T\right\}\cap\{v(\tau,\xi)>\widetilde{h}(\tau, \xi)\}\cap\left\{\xi\in
\partial \Theta\right\}$ the inequality
\begin{eqnarray} \min\left[\mathrm{A}_{\widetilde{f},
0}\left(\varphi\left(\tau,\xi\right)\right)-D_t
\varphi\left(\tau,\xi\right),-\frac{\displaystyle{\partial \varphi}}{\displaystyle{\partial
n}}\left(\tau,\xi\right)-\widetilde{\phi}\left(\tau,\xi,\varphi\left(\tau,\xi\right)\right)
\right] \leq 0 \label{E:viscosity011}
\end{eqnarray}
holds, $\mathbb{P}$-almost surely.
\end{itemize}
where
\begin{eqnarray}
\widetilde{f}(t,x,y,z)&=&\frac{1}{D_y\eta(t,x,y)}
\left[f\left(t,x,\eta(t,x,y),\sigma(x)^{*}D_x\eta(t,x,y)+D_y\eta(t,x,y)z\right)\right.\nonumber\\
&&\left.-\frac{1}{2}gD_yg(t,x,\eta(t,x,y))+L_x\eta(t,x,y)+\langle\sigma(x)^{*}D_{xy}\eta(t,x,y),z\rangle\right.\nonumber\\
&&\left.+\frac{1}{2}D_{yy}\eta(t,x,y)|z|^2\right]\label{Doss1}
\end{eqnarray}
and
\begin{eqnarray}
\widetilde{\phi}(t,x,y)=\frac{1}{D_y\eta(t,x,y)}\left[h(t,x,\eta(t,x,y))+D_x\eta(t,x,y)\nabla\psi(x)\right].\label{Doss2}
\end{eqnarray}
Combining inequality \eqref{E:def11} and \eqref{E:viscosity011}, we obtain that the random field $v$ is a stochastic
viscosity subsolution of the SPDE $\mathcal{OP}^{(\widetilde{f},\widetilde{\phi},0,\widetilde{h},l)}$, which ends the proof of Proposition 3.1.
\end{proof}
\subsection {Existence of stochastic viscosity solutions for SPDE
with nonlinear Neumann boundary condition} The main objective of
this subsection is to give a link between the stochastic obstacle problem
$\mathcal{OP}^{(f,\phi,g,h,l)}$ and the reflected generalized
BDSDE $(\ref{a1})$ introduced in Section 1. We consider
\allowdisplaybreaks\begin{align}
s\mapsto &A_{s}^{t,x} \,\,\,\hbox{is increasing}\nonumber\\
X_s^{t,x} &= x+\int^{s\vee t}_t b\left(X_r^{t,x}\right) dr+\int^{s\vee t}_t
\sigma\left(X_r^{t,x}\right) d{W}_r+\int^{s\vee t}_t \nabla \psi
\left(X_r^{t,x}\right) dA_r^{t,x}, \quad
\forall\, s\in [t,T]\,,\nonumber\\
A^{t,x}_{s}&=\int^{s\vee t}_t I_{\left\{ X^{t,x}_{r}\in\partial\Theta
\right\}}\, dA^{t,x}_{r}.\label{rSDE}
\end{align}
It is clear (see \cite{LZ}) that under
conditions $({\bf A1})$ on the coefficients $b$ and $\sigma$,
$(\ref{rSDE})$ has a unique strong ${\bf F}^{W}$-adapted solution.

Using the similar arguments as in Pardoux and Zhang \cite{PZ}( Propositions 3.1 and 3.2),
or Slomi\`nski \cite{Sl}, we can provide the following regularity results.
\begin{proposition}\label{P:continuity00}
There exists a constant $ C>0 $ such that for all for all $0\leq
t<t'\leq T$ and $x,\,x'\in \overline{\Theta}$,\  the following
inequalities hold: for any $p>4$
$$\mathbb{E}\left[\sup_{0\leq s\leq
T}\left|X^{t,x}_{s}-X^{t',x'}_{s}\right|^p\right] \leq
C\left\{ |t'-t|^{p/2}+|x-x'|^p\right\}$$
and
$$\mathbb{E}\left[\sup_{0\leq s\leq
T}\left|A_s^{t,x}-A_{s}^{t',x'}\right|^p\right]\leq C\left\{
|t'-t|^{p/2}+|x-x'|^p\right\}.$$
Moreover, for all $p\geq 1$,
there exists a constant $C_p$ such that for all
$(t,x)\in[0,T]\times \overline{\Theta}$,
\[\mathbb{E}\left(\left|A_s^{t,x}\right|^p \right) \leq C_p(1+t^p)\] and
for each $\mu$, $t<s<T$, there exists a constant $C(\mu,t)$ such
that for all $x\in\overline{\Theta}$,
$$\mathbb{E}\left(\displaystyle e^{\mu A_{s}^{t,x}}\right) \leq C(\mu,t).$$
\end{proposition}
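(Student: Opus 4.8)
The plan is to reduce everything to a single identity for the increasing process $A^{t,x}$, since the estimates for $X^{t,x}$ alone are of standard SDE type once the reflection term is under control. The key observation is that $dA^{t,x}_r$ is carried by the set $\{X^{t,x}_r\in\partial\Theta\}$, on which $\nabla\psi$ is the \emph{unit} inward normal; hence $\langle\nabla\psi(X^{t,x}_r),\nabla\psi(X^{t,x}_r)\rangle\,dA^{t,x}_r=dA^{t,x}_r$, and It\^o's formula applied to $\psi(X^{t,x}_s)$ along \eqref{rSDE} gives, for $t\le s\le T$,
\[
A^{t,x}_{s}=\psi(X^{t,x}_{s})-\psi(x)-\int_t^s L\psi(X^{t,x}_r)\,dr-\int_t^s\langle\nabla\psi(X^{t,x}_r),\sigma(X^{t,x}_r)\,dW_r\rangle,
\]
where $\psi$, $L\psi$ and $x\mapsto\sigma(x)^{*}\nabla\psi(x)$ are all bounded along the trajectory because $X^{t,x}$ stays in the bounded set $\overline{\Theta}$.

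From this identity the last two assertions are immediate. Taking $L^p$-norms, the deterministic terms are bounded and the stochastic integral is a martingale whose quadratic variation is $\le C(s-t)\le CT$, so Burkholder--Davis--Gundy gives $\mathbb{E}(|A^{t,x}_s|^p)\le C_p(1+t^p)$. For the exponential moment, write $e^{\mu A^{t,x}_s}$ as the product of the uniformly bounded factor $e^{\mu(\psi(X^{t,x}_s)-\psi(x)-\int_t^s L\psi(X^{t,x}_r)dr)}$, the Dol\'eans exponential of $-\mu\int\langle\nabla\psi(X^{t,x}_\cdot),\sigma(X^{t,x}_\cdot)dW_\cdot\rangle$, and $e^{\frac12\mu^2\langle\cdot\rangle_s}$; since the quadratic variation is bounded, Novikov's criterion makes the Dol\'eans exponential a genuine martingale, and one obtains $\mathbb{E}(e^{\mu A^{t,x}_s})\le C(\mu,t)$.

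For the two H\"older estimates I would first fix $t$ and treat $x,x'\in\overline{\Theta}$, then use the flow property for the time increment. For the spatial estimate, set $\delta X_s=X^{t,x}_s-X^{t,x'}_s$ and apply It\^o's formula to $e^{-\lambda(A^{t,x}_s+A^{t,x'}_s)}|\delta X_s|^{p}$ with $\lambda$ large. The drift and diffusion contributions are controlled by the global Lipschitz property $({\bf A1})$ of $b$ and $\sigma$; the reflection terms give $2\langle\delta X_r,\nabla\psi(X^{t,x}_r)\rangle dA^{t,x}_r-2\langle\delta X_r,\nabla\psi(X^{t,x'}_r)\rangle dA^{t,x'}_r$, which by the admissibility (interior-ball) property of the $\mathcal{C}^2_b$ domain — $\langle y-z,\nabla\psi(y)\rangle\le C|y-z|^2$ for $z\in\overline{\Theta},\ y\in\partial\Theta$, a Taylor consequence of $\psi\in\mathcal{C}^2_b$ — are bounded by $C|\delta X_r|^{2}(dA^{t,x}_r+dA^{t,x'}_r)$, and for $\lambda$ large the weight $e^{-\lambda(A^{t,x}+A^{t,x'})}$ turns the whole $dA$-part non-positive. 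Taking suprema, applying Burkholder--Davis--Gundy, Gronwall's lemma, and then undoing the weight via H\"older's inequality and the exponential moment bound of the previous step, yields $\mathbb{E}[\sup_{0\le s\le T}|X^{t,x}_s-X^{t,x'}_s|^{p}]\le C|x-x'|^{p}$; feeding this back into the identity for $A$ (and using that $\psi$, $L\psi$, $\sigma^{*}\nabla\psi$ are Lipschitz on $\overline{\Theta}$, plus one more BDG estimate) transfers it to the corresponding bound for $A^{t,x}-A^{t,x'}$.

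For $0\le t<t'\le T$, the flow identity $X^{t,x}_s=X^{t',X^{t,x}_{t'}}_s$ for $s\ge t'$ (with $X^{t',x'}\equiv x'$ and $A^{t',x'}\equiv0$ on $[t,t']$) reduces the time-increment bounds, via the spatial estimate already proved, to the single short-interval estimate $\mathbb{E}[\sup_{t\le s\le t'}|X^{t,x}_s-x|^{p}]\le C|t'-t|^{p/2}$, which in turn, through \eqref{rSDE}, reduces to bounding the reflection contribution $\mathbb{E}(|A^{t,x}_{t'}|^{p})\le C|t'-t|^{p/2}$ over $[t,t']$. This is the genuine obstacle: the crude bound from the first step only gives $O(1)$, and obtaining the sharp power $|t'-t|^{p/2}$ requires the finer analysis of the Skorokhod map — estimating the variation of the reflecting term over $[t,t']$ by the oscillation of the driving It\^o process over that interval — as carried out in S\l omi\'nski \cite{Sl} and Pardoux--Zhang \cite{PZ}; once that is in hand, the two displayed inequalities follow by combining it with the flow property and routine bookkeeping.
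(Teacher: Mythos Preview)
Your proposal is sound, but there is nothing to compare it against: the paper does not actually prove this proposition. It states the result with the preamble ``Using the similar arguments as in Pardoux and Zhang \cite{PZ} (Propositions~3.1 and~3.2), or S{\l}omi\'nski \cite{Sl}, we can provide the following regularity results,'' and then moves on without further argument.

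Your sketch --- It\^o's formula applied to $\psi(X^{t,x})$ to obtain an explicit representation of $A^{t,x}$, the resulting moment and exponential-moment bounds, the weighted $e^{-\lambda(A^{t,x}+A^{t,x'})}$ trick together with the interior-ball inequality for the spatial Lipschitz estimate, the flow property for the time increment, and finally the appeal to the Skorokhod-map oscillation analysis for the sharp $|t'-t|^{p/2}$ power on the boundary term --- is exactly the line of argument developed in those two references, which you yourself end up citing at the crucial step. So your write-up is effectively a correct expansion of what the paper leaves entirely implicit.
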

We consider also the following reflected
generalized BDSDE: for $(t,x)\in[0,T]\times\overline{\Theta}$
\begin{align}\label{E:back}
\left\{
\begin{aligned}
&Y_s^{t,x}  =   l\left(X^{t,x}_T\right)+\int_{s\vee t}^T
f\left(r,X^{t,x}_r ,Y_r^{t,x},Z_r^{t,x}\right) dr+\int_{s\vee t}^T
g\left(r,X^{t,x}_r ,Y_r^{t,x}\right) \overleftarrow{dB}_r
\\
& \qquad\qquad +\int_{s\vee t}^T \phi\left(r,X_r^{t,x}, Y_r^{t,x}\right)
dA_r^{t,x}+K^{t,x}_T-K_{s\vee t}^{t,x}-\int_{s\vee t}^TZ_r^{t,x}{dW_r},\\
& Y^{t,x}_s\geq h(s,X_s^{t,x})\, \mbox{such that}\ \displaystyle\int_{s\vee t}^T\left(Y^{t,x}_r-h(r,X_r^{t,x})\right)dK^{t,x}_r=0.
\end{aligned}
\right.
\end{align}
where the coefficients $l$, $f$, $g$, $\phi$ and $h$ satisfy the
hypotheses $({\bf A2})$-$({\bf A5})$.
The following regularity result generalizes the Kolmogorov continuity criterion to BDSDEs:
\begin{proposition}\label{cont}
Let the ordered triplet $(Y^{t,x}_s,Z^{t,x}_s,K^{t,x}_s) $ be a
solution of the BDSDE $(\ref{E:back})$. Then the random field
$(s,t, x)\mapsto Y^{t,x}_s$ is almost surely continuous on $[0,T]\times[0,T]\times \overline{\Theta}$.
\end{proposition}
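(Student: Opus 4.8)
The plan is to deduce the statement from the Kolmogorov continuity criterion applied to the three-parameter random field on $[0,T]\times[0,T]\times\overline{\Theta}$. Concretely, I would prove that for every large enough $p$ there is a constant $C_p$ such that, for all $(s,t,x)$ and $(s',t',x')$,
$$\E\Big[\big|Y^{t,x}_s-Y^{t',x'}_{s'}\big|^{p}\Big]\;\le\;C_p\Big(|s-s'|^{\theta p}+|t-t'|^{\theta p}+|x-x'|^{\theta p}\Big)$$
for a fixed $\theta>0$; since the index set is finite-dimensional this furnishes an a.s. jointly continuous modification, which coincides with $(s,t,x)\mapsto Y^{t,x}_s$ because $s\mapsto Y^{t,x}_s$ is continuous by Definition \ref{def} and $(t,x)\mapsto Y^{t,x}_s$ is $L^p$-continuous uniformly in $s$. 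Throughout I use the convention $Y^{t,x}_r=Y^{t,x}_t$ for $r\le t$, consistent with the $r\vee t$ lower limits in \eqref{E:back}, so it suffices to treat separately the regime $s=s'$ (varying $t,x$) and the regime $t=t',\,x=x'$ (varying $s$).

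For the $(t,x)$-increment, set $\Delta Y_r=Y^{t,x}_r-Y^{t',x'}_r$, $\Delta Z_r=Z^{t,x}_r-Z^{t',x'}_r$ and $\Delta K_r=K^{t,x}_r-K^{t',x'}_r$, apply It\^o's formula to $e^{\mu A_r}|\Delta Y_r|^{2}$ for a suitable $\mu>0$, and then repeat almost verbatim the computation from the proofs of Proposition \ref{propo} and Theorem \ref{Th:exis-uniq}: the Lipschitz hypotheses in $({\bf A2})$ and the boundedness of the derivatives of $g$ in $({\bf A5})$ let one absorb the $f$, $\phi$ and $g$ contributions (with the part of $\|\Delta Z\|^{2}$ generated by the $f$-term reabsorbed on the left via Young's inequality), and Gronwall's lemma followed by the Burkholder-Davis-Gundy inequality close the estimate. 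The only genuinely new terms are the increments of the driving data, $l(X^{t,x}_T)-l(X^{t',x'}_T)$, $f(r,X^{t,x}_r,\cdot,\cdot)-f(r,X^{t',x'}_r,\cdot,\cdot)$, $g(r,X^{t,x}_r,\cdot)-g(r,X^{t',x'}_r,\cdot)$ and $\phi(r,X^{t,x}_r,\cdot)-\phi(r,X^{t',x'}_r,\cdot)$; by the Lipschitz hypotheses on $l,f,g,\phi$ (and the regularity of $h$ in $({\bf A4})$) each of them is dominated by $\sup_{0\le r\le T}|X^{t,x}_r-X^{t',x'}_r|$ (the $\phi$-increment integrated against $dA^{t,x}$), whose $p$-th moments are controlled by $|t-t'|^{p/2}+|x-x'|^{p}$ through Proposition \ref{P:continuity00}; the exponential moment bound $\E e^{\mu A^{t,x}_s}\le C(\mu,t)$ from that same proposition is used to pass between the $e^{\mu A}$-weighted norms and the ordinary $L^{p}$ norms.

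The delicate point, and what I expect to be the \emph{main obstacle}, is the reflection cross-term $\int_r^{T}\Delta Y_u\,d(\Delta K_u)$, since $K^{t,x}$ is not a Lipschitz functional of the data. I would handle it with the standard Skorokhod device: decomposing $Y^{t,x}=(Y^{t,x}-h(\cdot,X^{t,x}))+h(\cdot,X^{t,x})$, and similarly $Y^{t',x'}$, the minimality conditions $\int(Y^{t,x}-h(\cdot,X^{t,x}))\,dK^{t,x}=0$ and $\int(Y^{t',x'}-h(\cdot,X^{t',x'}))\,dK^{t',x'}=0$ kill two of the resulting terms, the constraints $Y^{t,x}\ge h(\cdot,X^{t,x})$ and $Y^{t',x'}\ge h(\cdot,X^{t',x'})$ render two more sign-definite of the favourable sign, and the remainder is dominated by $\big(\sup_{0\le u\le T}|h(u,X^{t,x}_u)-h(u,X^{t',x'}_u)|\big)(K^{t,x}_T+K^{t',x'}_T)$. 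Cauchy-Schwarz, the regularity of $h$, Proposition \ref{P:continuity00}, and the bound $\sup_{(t,x)}\E|K^{t,x}_T|^{q}<\infty$ for every $q$ (obtained by running the a priori estimate in the proof of Proposition \ref{propo} at an arbitrary moment) then show that this term contributes a quantity of order $(|t-t'|^{p/2}+|x-x'|^{p})^{1/2}$, i.e. with a H\"older loss of order $1/2$; since it enters before any supremum or Gronwall step, the final $(t,x)$-estimate remains of Kolmogorov type provided $p$ is taken large enough.

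Finally, for the $s$-increment with $(t,x)$ fixed and $t\le s\le s'\le T$, equation \eqref{E:back} gives
$$Y^{t,x}_s-Y^{t,x}_{s'}=\int_s^{s'}f(r,X^{t,x}_r,Y^{t,x}_r,Z^{t,x}_r)\,dr+\int_s^{s'}g(r,X^{t,x}_r,Y^{t,x}_r)\,\overleftarrow{dB}_r+\int_s^{s'}\phi(r,X^{t,x}_r,Y^{t,x}_r)\,dA^{t,x}_r+\big(K^{t,x}_{s'}-K^{t,x}_s\big)-\int_s^{s'}Z^{t,x}_r\,dW_r,$$
and each term is estimated by H\"older's and the Burkholder-Davis-Gundy inequalities together with the finite moments of $\int_0^{T}f_r^{2}\,dr$, $\int_0^{T}g_r^{2}\,dr$, $\int_0^{T}\|Z^{t,x}_r\|^{2}\,dr$, $K^{t,x}_T$ and the increment moments of $A^{t,x}$ from Proposition \ref{P:continuity00}; the cases $s'\le t$ and $s\le t\le s'$ reduce to this one since $Y^{t,x}_\cdot$ is constant on $[0,t]$. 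Combining the $(t,x)$- and $s$-estimates into the displayed bound and invoking Kolmogorov's criterion on $[0,T]\times[0,T]\times\overline{\Theta}$ yields the asserted almost sure continuity.
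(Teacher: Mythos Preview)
Your core strategy---It\^o's formula on $e^{\mu A_r}|\Delta Y_r|^{2}$, Lipschitz/Young/Gronwall/BDG to obtain $\E\sup_{s}|Y^{t,x}_s-Y^{t',x'}_s|^{p}$, the Skorokhod decomposition for the cross term $\int\Delta Y\,d(\Delta K)$, and Proposition~\ref{P:continuity00} for the moments of $X^{t,x}$ and $A^{t,x}$---is exactly the paper's approach, and the H\"older loss of order $1/2$ you anticipate from the reflection term is precisely what produces the exponents $|t-t'|^{p/4}+|x-x'|^{p/2}$ appearing in the paper's final estimate before Kolmogorov's lemma.

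Where you depart from the paper is in the organisation of the Kolmogorov step. The paper applies Kolmogorov only in the parameter $(t,x)$, using the estimate with the $\sup_{s}$-norm on the left; since each path $s\mapsto Y^{t,x}_s$ is already a.s.\ continuous, this yields joint continuity in $(s,t,x)$ at once. You instead aim for a full three-parameter Kolmogorov bound and treat the $s$-increment separately. That extra step is unnecessary and, as written, has a gap: knowing only that $K^{t,x}_T$ has finite $q$-th moments does \emph{not} give $\E|K^{t,x}_{s'}-K^{t,x}_s|^{p}\le C|s'-s|^{\theta p}$ (a continuous increasing process with bounded moments need not be $L^{p}$-H\"older in its time variable), and extracting such a bound from \eqref{E:back} is circular because $K^{t,x}_{s'}-K^{t,x}_s$ and $Y^{t,x}_s-Y^{t,x}_{s'}$ determine each other through that equation. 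The same remark applies, to a lesser extent, to the $dA^{t,x}$ integral: Proposition~\ref{P:continuity00} controls $A^{t,x}_s-A^{t',x'}_s$ in $(t,x)$ and the size of $A^{t,x}_s$, but not $A^{t,x}_{s'}-A^{t,x}_s$ in $s$. Simply drop the $s$-increment paragraph and invoke Kolmogorov in $(t,x)$ with the uniform-in-$s$ estimate you already obtain from BDG; then the a.s.\ continuity of $s\mapsto Y^{t,x}_s$ from Definition~\ref{def} finishes the argument, and your proof coincides with the paper's.
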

\begin{proof}
In the same way as in the proof of Lemma 2.1, we have, for $t,t'\in [0,T],\; x,\,x'\in\Theta$ and $p> 4$,
\begin{eqnarray*}
&&\E\left(\sup_{0\leq s\leq T}\left\vert
Y^{t,x}_{s}-Y^{t',x'}_{s}\right\vert^p\right)+\E|K^{t,x}_T-K^{t',x'}_T|^p+\E\left(\int_0^T|Z^{t,x}_r-Z^{t',x'}_r|^{2}dr\right)^{p/2}
+\E\left(\int_0^T|Y^{t,x}_r-Y^{t',x'}_r|^{p}dA^{t,x}_r\right)\\
&&\leq C\left[\E\left(\sup_{0\leq s\leq T}\left|X^{t,x}_s-X^{t',x'}_s\right|^p\right)+ \E\left(\int_{0}^T[{\bf 1}_{[t,T]}f\left(r,X^{t,x}_r,Y^{t,x}_r,Z^{t,x}_r\right)
-{\bf 1}_{[t',T]}f(r,X^{t,x}_r,Y^{t,x}_r,Z^{t,x})]dr\right)\right.\\
&&\left.+\E\left(\int_{t\wedge t'}^{t\vee t'}\left|\phi\left(r,X^{t,x}_r,Y^{t,x}_r\right)\right|^pdA^{t,x}_r\right)+ \E\left(\int_{t\wedge t'}^{t\vee t'}\left|h\left(r,X^{t,x}_r\right)\right|[dK_r^{t,x}+dK_r^{t',x'}]\right)+\E\left(\int_0^T|X^{t,x}_r-X^{t',x'}_r|^pdA_r^{t,x}\right)\right.\\
&&\left.+\left(\E\sup_{0\leq s\leq T}|A^{t,x}_s-A_s^{t',x'}|^p\right)^{1/2}\right].
\end{eqnarray*}
Next, using Proposition 3.2 one can derive
\begin{eqnarray*}
\E\left(\sup_{0\leq s\leq T}\left\vert
Y^{t,x}_{s}-Y^{t',x'}_{s}\right\vert^p\right)\leq C(|t-t'|^{p/2}+|x-x'|^{p}+|t-t'|^{p/4}+|x-x'|^{p/2}).
\end{eqnarray*}
Therefore, il suffice to choose $p=\gamma$ convenably to get
\begin{eqnarray*}
\E\left(\sup_{0\leq s\leq T}\left\vert
Y^{t,x}_{s}-Y^{t',x'}_{s}\right\vert^{\gamma}\right)\leq C(|t-t'|^{1+\beta}+|x-x'|^{d+\delta}).
\end{eqnarray*}
We conclude from the last estimate, using Kolmogorov's lemma, that $\displaystyle{\{Y_s^{t,x}, s,t\in[0,T], x\in\overline{\Theta}\}}$
has an a.s. continuous version.
\end{proof}
Now, for each $(t,x)\in[0,T]\times\overline{\Theta},\; n\geq 1$, we consider the following BDSDE,
\begin{eqnarray}
^{n}Y^{t,x}_{s}&=&l(X_{T}^{t,x})+\int_{s}^Tf_n(r,X_{r}^{t,x},{}^{n}Y^{t,x}_{r},
{}^{n}Z^{t,x}_{r})dr\nonumber\\
&&+\int_{s}^T\phi(r,X_{r}^{t,x},^{n}Y^{t,x}_{r})dA_r^{t,x}+
\int_{s}^Tg(r,X_{r}^{t,x},^{n}Y^{t,x}_{r})\overleftarrow{dB}_{r}-\int_{s}^T
{}^{n}Z^{t,x}_{r}dW_{r}, \label{c4}
\end{eqnarray}
where $\displaystyle{f_{n}(t,x,y,z)=f(t,x,y,z)+n(y-h(t,x))^{-}}$.

Let $\{^{n}Y^{t,x}_{s},{}^{n}Z^{t,x}_{s},\,\ t\leq s\leq T\}$ denotes the
solution of  BDSDE \eqref{c4} and define $\displaystyle{u^{n}(t,x)={}^{n}Y^{t,x}_{t}}$. It is shown in Boufoussi et  al. \cite{Bal} that the function
$\displaystyle{v^n(t, x) = \varepsilon(t, x, u^n(t, x))}$ is an $\omega$-wise viscosity solution to the following SPDE
\begin{eqnarray*}
\left\{
\begin{array}{l}
(i)\,\displaystyle \frac{\partial
u^n(t,x)}{\partial t}+
Lu^n(t,x)+\widetilde{f}_n(t,x,u^n(t,x),\sigma^{*}(x)D_{x}u^n(t,x))=0,\,\,\
(t,x)\in[0,T]\times\Theta\\\\
(ii)\,\displaystyle\frac{\partial u^n}{\partial
n}(t,x)+\widetilde{\phi}(t,x,u^n(t,x))=0,\,\,\ (t,x)\in[0,T]\times\partial\Theta, \\\\
(iii)\,u^n(T,x)=l(x),\,\,\,\,\,\,\ x\in\overline{\Theta},
\end{array}\right.
\end{eqnarray*}
where $\displaystyle{\widetilde{f}_n(t,x,y,z)=\widetilde{f}(t,x,y,z)+\frac{1}{D_y\eta(t,x,y)}n(y-\widetilde{h}(t,x))^{-}}$.

Let us define for $(t, x)\in[0, T ]\time\Theta,\, u(t, \omega, x)= Y^{t,x}_t$ and $v(t, x) =\varepsilon(t, x, u(t, x))$. It follows from penalization argument, that (along a subsequence)
\begin{eqnarray*}
|v^n(\tau,\xi)-v(\tau,\xi)|\rightarrow 0,\;\; a.s.
\end{eqnarray*}
as $n$ goes to infinity.

Our main result in this section is the following:
\begin{theorem}
Let assumptions $({\bf A1})$-$({\bf A5})$ be satisfied. Then the function $u(t, x)$ defined above is a stochastic viscosity
solution of obstacle problem $\mathcal{OP}^{(f,\phi,g,h,l)}$.
\end{theorem}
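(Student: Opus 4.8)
\emph{Step 1 (reduction).} The plan is to remove the stochastic term by the Doss--Sussman change of unknown of Proposition~3.1, to identify the transformed function as an $\omega$-wise viscosity solution of a \emph{random} obstacle PDE obtained as a monotone limit of the penalized equations, and then to transform back. By Proposition~3.1, $u$ is a stochastic viscosity solution of $\mathcal{OP}^{(f,\phi,g,h,l)}$ if and only if $v(\cdot,\cdot)=\varepsilon(\cdot,\cdot,u(\cdot,\cdot))$ is a stochastic viscosity solution of $\mathcal{OP}^{(\widetilde f,\widetilde\phi,0,\widetilde h,l)}$, with $\widetilde f,\widetilde\phi,\widetilde h$ as in \eqref{Doss1}, \eqref{Doss2}, \eqref{Doss3}. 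Since the ``$g$'' of this transformed problem vanishes, the flow $\eta$ is the identity, so $\Psi=\varphi$ and $D_{y}\Psi=1$ in Definition~\ref{D:defvisco}; hence, by the Remark following that definition, $v$ is a stochastic viscosity sub- (resp.\ super-) solution of $\mathcal{OP}^{(\widetilde f,\widetilde\phi,0,\widetilde h,l)}$ exactly when, for $\P$-a.e.\ $\omega$, $v(\omega,\cdot,\cdot)$ is a deterministic viscosity sub- (resp.\ super-) solution of the obstacle problem with the frozen, space-continuous coefficients $\widetilde f(\omega,\cdot),\widetilde\phi(\omega,\cdot),\widetilde h(\omega,\cdot)$ and terminal value $l$. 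Thus it suffices to show that $v$ is an $\omega$-wise viscosity solution of $\mathcal{OP}^{(\widetilde f,\widetilde\phi,0,\widetilde h,l)}$; that $u\in\mathcal{C}({\bf F}^{B},[0,T]\times\overline{\Theta})$ follows as in \cite{Bal} from Proposition~\ref{cont} at $s=t$.

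\emph{Step 2 (monotone, locally uniform approximation).} Put $u^{n}(t,x)={}^{n}Y^{t,x}_{t}$ and $v^{n}=\varepsilon(\cdot,\cdot,u^{n})$; by \cite{Bal}, $v^{n}$ is an $\omega$-wise viscosity solution of the penalized \emph{non-reflected} SPDE with coefficient $\widetilde f_{n}=\widetilde f+\frac{1}{D_{y}\eta}\,n(y-\widetilde h)^{-}$, boundary datum $\widetilde\phi$, and terminal value $l$. Since $f_{n}\le f_{n+1}$, the comparison Theorem~\ref{Thm:comp} gives ${}^{n}Y^{t,x}_{s}\le{}^{n+1}Y^{t,x}_{s}$, so $u^{n}$ increases; as $y\mapsto\varepsilon(t,x,y)$ is continuous and increasing, $v^{n}$ increases too, and together with the penalization convergence recalled before the theorem this gives $v^{n}\uparrow v$ pointwise, a.s. Each $v^{n}$ is continuous in $(t,x)$ and so is $v$ --- Proposition~\ref{cont} applied to \eqref{E:back} yields continuity of $(t,x)\mapsto Y^{t,x}_{t}=u(t,x)$ and $\varepsilon$ is smooth --- so Dini's theorem makes $v^{n}\to v$ uniform on compact subsets of $[0,T]\times\overline{\Theta}$, for $\P$-a.e.\ $\omega$. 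Moreover $v(T,x)=\varepsilon(T,x,l(x))=l(x)$ by \eqref{c2}, and $v\ge\widetilde h$ on $[0,T]\times\overline{\Theta}$, since $u(t,x)=Y^{t,x}_{t}\ge h(t,X^{t,x}_{t})=h(t,x)$ by the reflection constraint in \eqref{E:back} and $\widetilde h(t,x)=\varepsilon(t,x,h(t,x))$ with $\varepsilon$ increasing in $y$.

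\emph{Step 3 (passage to the limit).} Fix $\omega$ in the full-measure set above and let $\varphi\in\mathcal{C}^{1,2}$, $(t_{0},x_{0})\in(0,T)\times\overline{\Theta}$ be such that, after the usual perturbation making it strict, $v-\varphi$ has a strict local maximum at $(t_{0},x_{0})$ (for the subsolution inequalities) or a strict local minimum there (for the supersolution inequalities), normalised so that $v(t_{0},x_{0})=\varphi(t_{0},x_{0})$. Local uniform convergence produces $(t_{n},x_{n})\to(t_{0},x_{0})$ at which $v^{n}-\varphi$ has a local maximum (resp.\ minimum) with $v^{n}(t_{n},x_{n})\to v(t_{0},x_{0})$. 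Writing the viscosity inequalities for $v^{n}$ at $(t_{n},x_{n})$ and using $A_{\widetilde f_{n},0}(\varphi)=A_{\widetilde f,0}(\varphi)-\frac{1}{D_{y}\eta}\,n\bigl(v^{n}(t_{n},x_{n})-\widetilde h(t_{n},x_{n})\bigr)^{-}$, whose penalization term is $\ge 0$: for the supersolution part one gets $A_{\widetilde f,0}(\varphi(t_{n},x_{n}))-D_{t}\varphi(t_{n},x_{n})\ge 0$ at interior points and, at boundary points (after a case split on the sign, in the limit, of $-\frac{\partial\varphi}{\partial n}-\widetilde\phi$), $\max\bigl\{A_{\widetilde f,0}(\varphi(t_{n},x_{n}))-D_{t}\varphi(t_{n},x_{n}),\,-\frac{\partial\varphi}{\partial n}(t_{n},x_{n})-\widetilde\phi(\ldots)\bigr\}\ge 0$, so letting $n\to\infty$ and using $v\ge\widetilde h$ yields the supersolution inequalities for $v$ in $\mathcal{OP}^{(\widetilde f,\widetilde\phi,0,\widetilde h,l)}$; for the subsolution part, if $v(t_{0},x_{0})=\widetilde h(t_{0},x_{0})$ the required inequality is trivial, while if $v(t_{0},x_{0})>\widetilde h(t_{0},x_{0})$ then $v^{n}(t_{n},x_{n})>\widetilde h(t_{n},x_{n})$ for all large $n$ by monotonicity and continuity, the penalization term vanishes, $A_{\widetilde f_{n},0}(\varphi(t_{n},x_{n}))=A_{\widetilde f,0}(\varphi(t_{n},x_{n}))$, and passing to the limit in the interior (resp.\ boundary) penalized subsolution inequality yields \eqref{E:def11} (resp.\ \eqref{E:viscosity011}) at $(t_{0},x_{0})$. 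Thus $v$ is an $\omega$-wise viscosity solution of $\mathcal{OP}^{(\widetilde f,\widetilde\phi,0,\widetilde h,l)}$, and by Step~1, $u$ is a stochastic viscosity solution of $\mathcal{OP}^{(f,\phi,g,h,l)}$.

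\emph{Main obstacle.} The delicate points are: (i) upgrading the a.s.\ monotone pointwise convergence $v^{n}\uparrow v$ to locally uniform convergence, which needs, besides Dini's theorem, uniform-in-$n$ Kolmogorov-type continuity estimates for $v^{n}$ in the spirit of Proposition~\ref{cont}, so that $v$ is genuinely continuous; and (ii) carrying the nested $\min$/$\max$ of the obstacle-plus-Neumann viscosity formulation through the limit --- the key being that at a subsolution touching point with $v(t_{0},x_{0})>\widetilde h(t_{0},x_{0})$ the penalization term $\frac{1}{D_{y}\eta}\,n\bigl(v^{n}(t_{n},x_{n})-\widetilde h(t_{n},x_{n})\bigr)^{-}$ is exactly $0$ for large $n$, which is precisely where the monotonicity $v^{n}\uparrow v$ (with continuity of $v^{n}$ and $\widetilde h$) is used --- while the $\omega$-dependence of $\widetilde f,\widetilde\phi,\widetilde h$ is harmless since these are continuous in the space variables, so the classical stability theorem for viscosity solutions applies $\omega$ by $\omega$.
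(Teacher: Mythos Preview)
Your proof is correct and follows essentially the same strategy as the paper: reduce via the Doss--Sussman transformation of Proposition~3.1, invoke the $\omega$-wise viscosity property of the penalized approximants $v^{n}$ from \cite{Bal}, and pass to the limit using stability of viscosity solutions. Your version is in fact somewhat more complete---you make the locally uniform convergence explicit via Dini's theorem and monotonicity, and you treat the supersolution case (which the paper omits), whereas the paper proceeds more tersely by producing approximating triples $(\tau_{n},\xi_{n},\varphi_{n})$ without spelling out the underlying uniform convergence.
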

\begin{proof}
By the definition of $u$ it is easy to see that $u(T, x) = l(x)$. Now, since $Y^{t,x}_s$ is $\mathcal{F}^{W}_{t,s}\otimes\mathcal{F}^{B}_{s,T}$-measurable,
it follows that $Y^{t,x}_{t}$ is $\mathcal{F}^{B}_{t,T}$-measurable. Consequently, $u(t, x)$ is $\mathcal{F}^{B}_{t,T}$-measurable and
so it is independent of $\omega'\in\Omega'$. Therefore, combining this result with Proposition \ref{cont}, we obtain $u\in C({\bf F}^{B};[0,T]\times\overline{\Theta})$.
On the other hand, it follows from it definition that for all $(\tau,\xi)\in\mathcal{M}^{B}_{0,T}\times L^{0}({\mathcal{F}}^{B}_{\tau};\overline{\Theta})$,
\begin{eqnarray*}
u(\tau(\omega),\xi(\omega))=Y^{\tau(\omega),\xi(\omega)}_{\tau(\omega)}\geq h(\tau(\omega),\xi(\omega)),\;\;\; \P\mbox{-a.s.}
\end{eqnarray*}
Thus it remains to show that $u$ satisfies \eqref{def1}-\eqref{viscosity01} and \eqref{E:def12}-\eqref{E:viscosity02}. Using Proposition 3.1, it
suffices to prove that $v$ satisfies \eqref{E:def11} and \eqref{E:viscosity011}. To this end, let $\omega\in\Omega$ be fixed such that
\begin{eqnarray}
|v^n(\omega,t,x)- v(\omega,t,x)|\rightarrow 0\,\,\ \mbox{as}\;\;\; n\rightarrow \infty,\label{cv}
\end{eqnarray}
and consider $(\tau,\xi,\varphi)\in\mathcal{M}_{0,T}^B\times L^0\left(\mathcal{F}_{\tau}^B,\overline{\Theta}\right)\times\mathcal{C}^{1,2}\left(\mathcal{F}_{\tau}^B,\
[0,T]\times \overline{\Theta}\right)$ verify, for such fixed $\omega,\:\; 0<\tau(\omega)<T,\; v(\omega,\tau(\omega),\xi(\omega))>\widetilde{h}(\omega,\tau(\omega),\xi(\omega))$ and the inequality
\begin{eqnarray}
v(\omega,t,x)-\varphi(\omega,t,x)< 0=v(\omega,\tau(\omega),\xi(\omega))-\varphi(\omega,\tau(\omega),\xi(\omega))\label{V'}
\end{eqnarray}
for all $(t,x)$ in some neighborhood
$\mathcal{V}\left(\omega,\tau\left(\omega\right),\xi\left(\omega\right)\right)$ of $\left(\tau\left(\omega\right),\xi\left(\omega\right)\right)$.
Then there exists sequence $(\tau_{n}(\omega),\xi_{n}(\omega),\varphi_n(\omega))_{n\geq 1}\in [0,T]\times\overline{\Theta}\times\mathcal{C}^{1,2}\left([0,T]\times\overline{\Theta}\right)$  satisfy
\begin{eqnarray*}
\tau_{n}(\omega)&\rightarrow& \tau(\omega),\\
\xi_{n}(\omega)&\rightarrow&\xi(\omega),\\
\varphi_n(\omega)&\rightarrow&\varphi(\omega),
\end{eqnarray*}
such that the inequality
\begin{eqnarray*}
v^{n}(\omega,t,x)-\varphi_n(\omega,t,x)<
0=v^{n}(\omega,\tau_{n}(\omega),\xi_{n}(\omega))-\varphi_n(\omega,\tau_{n}(\omega),\xi_{n}(\omega))\label{V1}
\end{eqnarray*}
holds for all $(t,x)$ in some neighborhood $\mathcal{V}\left(\tau_{n}\left(\omega\right),\xi_{n}\left(\omega\right)\right)\subset
\mathcal{V}\left(\tau\left(\omega\right),\xi\left(\omega\right)\right)$ and a suitable subsequence of $(v^n)_{n\geq1}$. Using the fact that $v^n(\omega,\cdot,\cdot)$ is a (deterministic) viscosity solution of the PDE $(\widetilde{f}_n(\omega,\cdot), \widetilde{\phi}(\omega,\cdot),0,l)$, we obtain:
\begin{itemize}
\item[(a)] if $\xi_n(\omega)\in\Theta$ the inequality
\begin{equation*}
\mathrm{A}_{\widetilde{f}_{n},0}\left(\varphi_n\left(\omega,\tau_{n}(\omega),\xi_{n}(\omega)\right)\right)-D_t
\varphi_n\left(\omega,\tau_{n}(\omega),\xi_{n}(\omega)\right)\leq 0
\end{equation*}
holds;
\item[(b)] if $\xi_n(\omega)\in
\partial\Theta$, the inequality
\begin{eqnarray*}
\min\left[
\mathrm{A}_{\widetilde{f}_{n},0}\left(\varphi_n\left(\omega,\tau_{n}(\omega),\xi_{n}(\omega)\right)\right)-D_t
\varphi_n\left(\omega,\tau_{n}(\omega),\xi_{n}(\omega)\right),\right.\\
\left.-\frac{\partial\varphi_n}{\partial
n}(\omega,\tau_{n}(\omega),\xi_{n}(\omega))-\widetilde{\phi}(\omega,\tau(\omega)_{n},\xi_{n}(\omega),\Psi_n(\omega,\tau_{n}(\omega),\xi_{n}(\omega)))\right]\leq
0
\end{eqnarray*}
holds.
\end{itemize}
On the other hand, since $v(\omega,\tau(\omega),\xi(\omega))>\widetilde{h}(\omega,\tau(\omega),\xi(\omega))$, it follows from \eqref{cv} that $v^n(\omega,\tau(\omega),\xi(\omega))>\widetilde{h}(\omega,\tau(\omega),\xi(\omega))$ for $n$ large enough such that passing to the limit in the two last inequalities, we get:
\begin{itemize}
 \item[(a)] if $\xi(\omega)\in\Theta$, the inequality
\begin{equation*}
\mathrm{A}_{\widetilde{f},0}\left(\varphi\left(\omega,\tau(\omega),\xi(\omega)\right)\right)-D_t
\varphi\left(\omega,\tau(\omega),\xi(\omega)\right)\leq 0
\end{equation*}
holds;
\item[(b)] if $\xi(\omega)\in
\partial\Theta$, the inequality
\begin{eqnarray*}
\min\left[
\mathrm{A}_{\widetilde{f},0}\left(\varphi\left(\omega,\tau(\omega),\xi(\omega)\right)\right)-D_t
\varphi\left(\omega,\tau(\omega),\xi(\omega)\right),\right.\\
\left.-\frac{\partial\varphi}{\partial
n}(\omega,\tau(\omega),\xi(\omega))-\widetilde{\phi}(\omega,\tau(\omega),\xi(\omega),\Psi(\omega,\tau(\omega),\xi(\omega)))\right]\leq
0
\end{eqnarray*}
holds.
\end{itemize}

\end{proof}

\noindent{\bf Acknowledgments}\newline This work is partially done when the first author was post doctoral
internship at Cadi Ayyad University of Marrakech. He would like
to express his deep gratitude to B. Boufoussi, Y. Ouknine and UCAM
Mathematics Department for their friendly hospitality. An anonymous
referee is also acknowledged for his comments, remarks and for a
significant improvement to the overall presentation of this paper.

\label{lastpage-01}
\end{document}